\documentclass[11pt]{amsart}
\usepackage{color}
\usepackage[numbers,sort&compress]{natbib}
\usepackage{amsthm}
\usepackage{mathrsfs}
\usepackage{graphicx}
\usepackage{latexsym,bm,amsmath,amssymb}
\usepackage{epsfig}
\usepackage{calligra}
\usepackage[T1]{fontenc}
\allowdisplaybreaks[4]
\textwidth 5.8in \oddsidemargin 0.1in \evensidemargin 0.1in
\title[planar system defined by the sum of two quasi-homogeneous vecter fields]{Limit cycles of planar system defined by the sum of two quasi-homogeneous vecter fields}
\author[J.Huang and H.Liang]{Jianfeng Huang$^1$ and Haihua Liang$^2$}
\address{$^1$  Department of Mathematics,\ Jinan University,\ Guangzhou\ 510632,\ P.R.\ China}
\email{thuangjf@jnu.edu.cn}
\address{$^2$
School of Mathematics and Systems Science,\ Guangdong Polytechnic
Normal University,\ Guangzhou\ 510665,\ P.R.\ China}
\email{lianghhgdin@126.com}
\thanks{The first author is supported by the NSF of China (No.11401255) and the China Scholarship Council (No.201606785007)
and the Fundamental Research Funds for the Central Universities (No.21614325).
The second author is supported by the NSF of China (No.11771101) and the NSF of Guangdong Province (No.2015A030313669) and the Excellent Young Teachers Training Program for colleges and universities of Guangdong Province, China (No.Yq2013107).
}
\newtheorem{thm}{Theorem}[section]
\newtheorem{prop}[thm]{Proposition}
\newtheorem{lem}[thm]{Lemma}
\newtheorem{cor}[thm]{Corollary}

\theoremstyle{remark}

\theoremstyle{definition}

\newtheorem{rem}[thm]{Remark}

\begin{document}

\maketitle

\begin{abstract}
In this paper we consider the limit cycles of the  planar system
\begin{align*}
\frac{d}{dt}(x,y)=\bm X_n+\bm X_m,
\end{align*}
where $\bm X_n$ and $\bm X_m$ are quasi-homogeneous vector fields of degree $n$ and $m$ respectively. We prove that under a new hypothesis, the maximal number of limit cycles of the system is $1$.
We also show   that our result can be applied to some systems when the previous results are invalid.
The proof
is based on the investigations for the Abel equation and the generalized-polar equation associated with the system, respectively.
Usually these  two kinds of equations need to be dealt with separately, and for both equations, an efficient approach to estimate the number of periodic solutions is constructing suitable auxiliary functions.
  In the present paper we develop a formula on the divergence, which allows us to construct  an auxiliary function of one equation  with  the auxiliary function of the other equation, and
  vice versa.

\end{abstract}

\tableofcontents \setcounter{tocdepth}{1}

\section{Introduction and statements of main results}
Let $p,q\in\mathbb{Z}^{+}$ and $s\in\mathbb Z^+\cup\{0\}$. We say that a planar vector field $\bm{X}=(P,Q):\mathbb R^2\rightarrow\mathbb R^2$ is quasi-homogeneous with weight $(p,q)$ and degree $s$, if
\begin{align*}
P(\lambda^{p}x,\lambda^{q}y)=\lambda^{p+s-1}\cdot P(x,y),\ \
Q(\lambda^{p}x,\lambda^{q}y)=\lambda^{q+s-1}\cdot Q(x,y),\indent \lambda\in\mathbb{R}.
\end{align*}

There are plenty of works on the quasi-homogeneous vector field. A main purpose of them is to study the degenerated singularity and analyze the topology of a system. See \cite{taubes40,taubes41,taubes42,taubes43,taubes44,taubes45,taubes46,taubes47,taubes48,taubes49,taubes50} and the references therein. Observe that an arbitrary planar polynomial vector field with a singularity at origin can be written into the form $\bm X=\sum^n_{i=1}\bm X_i$, where $\bm X_i$ is quasi-homogeneous with weight $(p,q)$ and degree $i$. In particular, if $(p,q)=(1,1)$, then it is a well-known homogeneous decomposition.

One of the significant problems in the qualitative theory of real planar differential
systems is to control the number of limit cycles for a given class of polynomial systems, which is originated from the second part of Hilbert's 16th problem.
In this paper we consider the limit cycles of a planar system
\begin{align}\label{eq2}
\frac{d}{dt}(x,y)=\bm X(x,y)=\bm X_n(x,y)+\bm X_m(x,y),
\end{align}
where $m>n$, and $\bm X_i=(P_i,Q_i)$ is quasi-homogeneous vector field with weight $(p,q)$ and degree $i=n,m$.

As we know, system \eqref{eq2} in various types have been extensively studied and gained wide attention in decades \cite{CGGIJBC,taubes29,taubes30,taubes29,taubes2,taubes32,taubes42,taubes43,taubes51,taubes45,taubes32,taubes36,taubes31,taubes35,taubes33,taubes34}.
One of
the particularities of this system is that each limit cycle surrounding the origin can be expressed
in generalized polar coordinates as $r=r(\theta)$, with $r(\theta)$ being a smooth periodic function, see for instance \cite{taubes30,taubes29,taubes2,taubes32}, etc. This particularity provides us an opportunity to consider the Hilbert's 16th
problem in a natural and simple way.

So far, amount of work has been carried out for the bifurcation of \eqref{eq2} with small perturbations.  Li et al.  \cite{taubes42} provided an
upper bound for the number of limit cycles bifurcating from the period annulus of any homogeneous and quasi-homogeneous
centers. Following this line Gavrilov et al.  \cite{taubes43} and Gin\'{e} et al. \cite{taubes51} gave some significant improvements of the result.
In 1997, Cima et al.  \cite{taubes45} investigated the limit cycles for \eqref{eq2} in some special type. They proved that there exists such a system with at least $(n + m)/2$ limit cycles. Recently, Gasull et al. \cite{taubes32} improve the previous works. Their results include cases where the origin is a focus, a node, a saddle or a nilpotent singularity.
For more works, see for instance \cite{taubes36,taubes31,taubes35,taubes33,taubes34} and the references
therein.

In contrast, only a few results for \eqref{eq2} in non-bifurcation case are obtained. Here we summarize the representative ones as below: Let
\begin{align}\label{eq3}
\begin{split}
&a_i(\theta)=(m-n)\bigg(\cos\theta\cdot P_i(\cos\theta,\sin\theta)+\sin\theta\cdot Q_i(\cos\theta,\sin\theta)\bigg),\\
&b_i(\theta)=p\cos\theta\cdot Q_i(\cos\theta,\sin\theta)-q\sin\theta\cdot P_i(\cos\theta,\sin\theta),\\
&i=n,m.
\end{split}
\end{align}
\begin{itemize}
\item[(I)] If $a_mb_n-a_nb_m\not\equiv0$ does not change sign, then \eqref{eq2} has at most $1$ limit cycle. Moreover, this limit cycle surrounds the origin if it does exist (see Coll, Gasull and Prohens \cite{taubes29}).
\item[(II)] If $b_m\big(a_mb_n-a_nb_m\big)\not\equiv0$ does not change sign, then \eqref{eq2} has at most $2$ limit cycles surrounding the origin (see Carbonell and Llibre \cite{taubes30} for homogeneous case, and Coll, Gasull and Prohens \cite{taubes29} for general case).
\item[(III)] If $(p,q)=(1,1)$, $\bm X_n=(ax-y,x+ay)$, and $a_mb_n-2a_nb_m-\dot{b_m}\not\equiv0$ does not change sign, then \eqref{eq2} has at most $2$ limit cycles  surrounding the origin (see Gasull and Llibre \cite{taubes2}).
\item[(IV)] If $(p,q)=(1,1)$, $\bm X_n=(ax-y,x+ay)$, and either $a_mb_n-2a_nb_m-\dot{b_m}\equiv0$, or $b_m\big(a_mb_n-a_nb_m\big)\equiv0$, then \eqref{eq2} has at most $1$ limit cycle surrounding the origin (see Gasull and Llibre \cite{taubes2}).
\end{itemize}

This paper will concentrate on  system \eqref{eq2} with general weight $(p,q)$. As far as we know, until now, there is no other article
to study the upper bound for the number of limit cycles of this kind of system, except  \cite{taubes29}. In the excellent paper  \cite{taubes29},
the authors not only provide some sufficient conditions  such that  system  \eqref{eq2} can has at most $1$, or $2$, or $3$ limit cycles, but also   reveal many basic qualitative properties of the system.
These properties are of great help to understand the dynamical behaviors of  \eqref{eq2}.
However,
 during the applications we found that there exist many systems of the form \eqref{eq2} which do not fulfill the hypotheses proposed in \cite{taubes29}, but do have at most one   limit cycle.
 We will show some of  such  systems  in section 3.
The main goal of the present paper is providing a new criterion to estimate the number of limit cycles of system \eqref{eq2}, see the following theorem.


\begin{thm}\label{thm1}
Suppose that
\begin{align*}
\varPhi(\theta)\triangleq\frac{a_n(\theta)}{b_m(\theta)}-\dot{\left(\frac{b_n}{b_m}\right)}(\theta)\neq0.
\end{align*}
Then system \eqref{eq2} has at most $1$ limit cycle, counted with multiplicity. This upper bound is sharp. Furthermore, if the limit cycle
exists, then it surrounds the origin, and is stable (resp. unstable) when $b_m\varPhi>0$ (resp. $<0$).
\end{thm}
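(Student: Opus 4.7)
The plan is to lift system \eqref{eq2} onto the cylinder $(\theta,r)\in\mathbb S^1\times(0,\infty)$ via the generalized polar coordinates $x=r^p\cos\theta$, $y=r^q\sin\theta$. Using the quasi-homogeneity of $\bm X_n$ and $\bm X_m$, this produces the generalized polar equation
\begin{align*}
\frac{dr}{d\theta}\;=\;F(r,\theta)\;\triangleq\;\frac{r\bigl(a_n(\theta)+r^{m-n}a_m(\theta)\bigr)}{(m-n)\bigl(b_n(\theta)+r^{m-n}b_m(\theta)\bigr)},
\end{align*}
and, via the substitution $\rho=r^{-(m-n)}$, its Abel-type companion
\begin{align*}
\bigl(b_m(\theta)+b_n(\theta)\rho\bigr)\,\frac{d\rho}{d\theta}\;=\;-\bigl(a_m(\theta)\rho+a_n(\theta)\rho^2\bigr).
\end{align*}
Limit cycles of \eqref{eq2} surrounding the origin correspond bijectively to positive $2\pi$-periodic solutions of either equation; limit cycles that do not surround the origin must be excluded by a separate argument, likely a Bendixson-type consideration near the tangency set $\{\dot\theta=0\}$ that again exploits the divergence formula described below.

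For either equation I would construct a Dulac-type auxiliary function: a positive $V(r,\theta)$ on the cylinder for which $\partial_\theta V+\partial_r(V\,F)$ has constant sign. If such a $V$ exists, two nested periodic orbits would violate Green's theorem applied to the annulus between them, giving uniqueness. The heart of the argument is then a divergence formula: a $V(r,\theta)$ built from $r$, the factor $b_n+r^{m-n}b_m$, and the scalars $b_m$ and $b_n/b_m$ should satisfy
\begin{align*}
\partial_\theta V+\partial_r(V\,F)\;=\;V\,G(r,\theta)\,\varPhi(\theta),
\end{align*}
with $G(r,\theta)$ explicit and sign-definite on the relevant region. Heuristically, $\dot{(b_n/b_m)}$ is the $\theta$-derivative one reads off the Abel form and $a_n/b_m$ is the corresponding inhomogeneous term from the polar form, so the combination $\varPhi=a_n/b_m-\dot{(b_n/b_m)}$ is precisely what survives when an auxiliary function for one equation is transferred into the other (the mechanism announced in the abstract). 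Since $\varPhi\ne0$ is continuous and hence of constant sign, $\partial_\theta V+\partial_r(V\,F)$ has constant sign, and so \eqref{eq2} admits at most one limit cycle surrounding the origin.

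For the stability claim the characteristic exponent of a hypothetical orbit $r_0(\theta)$ is $\int_0^{2\pi}\partial_rF\bigl(r_0(\theta),\theta\bigr)\,d\theta$. The identity
\begin{align*}
\partial_rF\;=\;\frac{\partial_\theta V+\partial_r(V\,F)}{V}-\frac{d}{d\theta}\log V\bigl(r_0(\theta),\theta\bigr),
\end{align*}
integrated over $[0,2\pi]$ (the $\log V$ term is exact), reduces the sign of the exponent to the sign of $\int_0^{2\pi}G(r_0,\theta)\,\varPhi(\theta)\,d\theta$. Once the divergence formula is pinned down, $G$ will be shown to have the sign of $1/b_m$, so the exponent inherits the sign of $b_m\varPhi$. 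The ``counted with multiplicity'' bound is then immediate: a non-vanishing Floquet exponent makes the orbit hyperbolic, hence of multiplicity one, and sharpness is established by exhibiting an explicit instance of \eqref{eq2} with precisely one limit cycle.

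The main obstacle I anticipate is the determination of the correct $V$ and verification of the divergence formula: tuning the $r$-exponent and the powers of $b_n+r^{m-n}b_m$ inside $V$ so that every contribution except $V\,G\,\varPhi$ cancels is a delicate algebraic task, and is exactly the ``construct an auxiliary function of one equation from the auxiliary function of the other'' mechanism announced in the abstract. A secondary obstacle is the exclusion of limit cycles not surrounding the origin; if the sign-definiteness of the divergence does not extend across the singular set $\{b_n+r^{m-n}b_m=0\}$, a complementary argument directly on the planar field $\bm X$ will be needed to close the proof.
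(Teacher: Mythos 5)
Your overall mechanism is the right one, and in fact your anticipated auxiliary function is essentially the paper's: with $V=\frac{b_n+b_mr}{b_mr^{2}}$ (in the paper's radial variable) one gets exactly $\partial_rR+\partial_\theta\mathcal F+\partial_r\mathcal F\cdot R=-\,b_m\varPhi/(b_n+b_mr)$, i.e.\ equation \eqref{eq19}, and the exclusion of cycles not surrounding the origin is indeed done by a Dulac function on the full polar system (Proposition \ref{prop1}). However, there is a genuine gap in the case where $b_n$ never vanishes and $b_nb_m<0$. Then the curve $b_n(\theta)+b_m(\theta)r^{m-n}=0$ lies entirely in $\{r>0\}$ and splits the region $V$ that contains all candidate limit cycles into \emph{two} annular components; a single sign-definite Dulac/auxiliary function (note that your factor $G$, like the paper's $-b_m/(b_n+b_mr)$, necessarily changes sign across that curve) only gives at most one periodic orbit \emph{per component}, hence at most $2$ in total. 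The paper states this obstruction explicitly in Remark \ref{rem1}, and it is precisely why the direct argument of Proposition \ref{prop4} is reserved for the case where $b_n$ has zeros (so the region above both $r=0$ and $r=-b_n/b_m$ is simply connected).

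To close this case the paper does not use your substitution $\rho=r^{-(m-n)}$ (which yields a rational, not Abel, equation and does not change the topology of the problem), but the Cherkas transformation $\rho=b_mr/(b_n+b_mr)$, which produces a genuine Abel equation \eqref{eq6} and sends $r=\infty$ to the periodic solution $\rho=1$. It then invokes Theorem \ref{lem1} with the two interpolation functions $\lambda_1=1$, $\lambda_2=\varepsilon$: the hypothesis of that theorem is verified because $S(\tau,\varepsilon)-\dot\varepsilon$ has, for small $\varepsilon$, the sign of $\varepsilon\,(b_m/b_n)\varPhi$, yielding at most $2$ nonzero limit cycles globally, of which $\rho=1$ is one — leaving at most one in $U_2\cup U_3$. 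Your proposal contains no substitute for this global two-solution count, so as written it proves the bound $2$, not $1$, in that case. A secondary (minor) omission: in the component where $d\theta/dt<0$ the stability of a cycle as a solution of \eqref{eq5} is opposite to its stability for \eqref{eq2}, which must be tracked to get the stated dependence on the sign of $b_m\varPhi$.
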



Recently, the authors in \cite{taubes37} provide a new uniqueness criterion for system \eqref{eq2} when $(p,q)=(1,1)$ and $\bm X_n=(ax-y,x+ay)$.
They prove that the maximal number of limit cycles surrounding the origin of the system is $1$, if $(m-1)ab_m+\dot{b_m}\neq0$. This result can be applied to some systems which violate the conditions stated in (I)-(IV).
Taking advantage of Theorem \ref{thm1}, we can easily improve it and obtain the following  corollary.



\begin{cor}\label{cor1}
Consider system
\begin{align}\label{eq9}
\begin{split}
&\frac{dx}{dt}=ax-y+P_m(x,y),\\
&\frac{dy}{dt}=x+ay+Q_m(x,y),
\end{split}
\end{align}
where $P_m, Q_m$ are homogeneous polynomials of degree $m\geq2$. Let
$$\psi(\theta)=\cos\theta\cdot Q_m(\cos\theta,\sin\theta)-\sin\theta\cdot P_m(\cos\theta,\sin\theta).$$
If $(m-1)a\psi(\theta)+\dot \psi(\theta)\neq0$, then the system has at most $1$ limit cycle (counted with multiplicity). This upper is sharp. Furthermore, if the limit cycle exists, then it surrounds the origin, and is stable (resp. unstable) when $(m-1)a+\dot{\psi}/\psi>0$ (resp. $<0$).
\end{cor}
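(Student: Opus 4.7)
The strategy is to recognize Corollary \ref{cor1} as the specialization of Theorem \ref{thm1} to the parameters $(p,q)=(1,1)$, $n=1$, and $\bm X_1=(ax-y,\,x+ay)$, and to check that the hypothesis $(m-1)a\psi+\dot\psi\neq0$ translates verbatim into $\varPhi\neq0$.

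First I would substitute these data into the formulas \eqref{eq3}. Using $P_1=ax-y$ and $Q_1=x+ay$, a direct expansion at $(x,y)=(\cos\theta,\sin\theta)$ gives
\begin{align*}
\cos\theta\cdot P_1+\sin\theta\cdot Q_1 &= a(\cos^2\theta+\sin^2\theta)=a,\\
\cos\theta\cdot Q_1-\sin\theta\cdot P_1 &= \cos^2\theta+\sin^2\theta=1,
\end{align*}
so that $a_n(\theta)=(m-1)a$ and $b_n(\theta)=1$. For the perturbative part, setting $P_m,Q_m$ in \eqref{eq3} with $(p,q)=(1,1)$ gives $b_m(\theta)=\cos\theta\cdot Q_m(\cos\theta,\sin\theta)-\sin\theta\cdot P_m(\cos\theta,\sin\theta)=\psi(\theta)$.

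Next I would plug these into the definition of $\varPhi$ from Theorem \ref{thm1}:
\begin{align*}
\varPhi(\theta)=\frac{a_n(\theta)}{b_m(\theta)}-\frac{d}{d\theta}\!\left(\frac{b_n(\theta)}{b_m(\theta)}\right)=\frac{(m-1)a}{\psi(\theta)}+\frac{\dot\psi(\theta)}{\psi(\theta)^2}=\frac{(m-1)a\,\psi(\theta)+\dot\psi(\theta)}{\psi(\theta)^2}.
\end{align*}
Under the implicit standing assumption that $\psi$ does not vanish (otherwise the generalized polar form and $\varPhi$ itself are ill-posed), we see that the corollary's hypothesis $(m-1)a\psi+\dot\psi\neq0$ is exactly equivalent to $\varPhi\neq0$. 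Invoking Theorem \ref{thm1} then delivers the uniqueness (and sharpness, inherited from the theorem) of the limit cycle.

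Finally, for the stability assertion, I would compute
\begin{align*}
b_m(\theta)\,\varPhi(\theta)=\psi(\theta)\cdot\frac{(m-1)a\,\psi(\theta)+\dot\psi(\theta)}{\psi(\theta)^2}=(m-1)a+\frac{\dot\psi(\theta)}{\psi(\theta)},
\end{align*}
so the sign criterion in Theorem \ref{thm1} (stable when $b_m\varPhi>0$, unstable when $b_m\varPhi<0$) translates directly into the stated condition on $(m-1)a+\dot\psi/\psi$. There is essentially no obstacle here; the proof is a short computation, and the only subtle point to mention is the implicit requirement $\psi\neq0$, which is needed both for the application of Theorem \ref{thm1} and for the stability expression $\dot\psi/\psi$ to make sense.
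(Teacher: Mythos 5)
Your reduction to Theorem \ref{thm1} is exactly the route the paper takes, and your computations of $a_1=(m-1)a$, $b_1=1$, $b_m=\psi$, of $\varPhi=\bigl((m-1)a\psi+\dot\psi\bigr)/\psi^2$, and of $b_m\varPhi=(m-1)a+\dot\psi/\psi$ all agree with the paper's. However, there is one genuine gap: you dispose of the nonvanishing of $\psi$ by declaring it an ``implicit standing assumption,'' whereas the corollary as stated assumes only $(m-1)a\psi(\theta)+\dot\psi(\theta)\neq0$ and nothing about $\psi$ itself. Since $b_m=\psi$ sits in the denominators defining $\varPhi$, you cannot invoke Theorem \ref{thm1} until you have \emph{proved} that $\psi$ never vanishes; adding it as a hypothesis proves a strictly weaker statement than the one asserted.

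The paper closes this gap with a short argument you could have supplied: let $\theta_{\max}$ and $\theta_{\min}$ be points where the periodic function $\psi$ attains its maximum and minimum, so $\dot\psi(\theta_{\max})=\dot\psi(\theta_{\min})=0$. The hypothesis then gives $(m-1)a\psi(\theta_{\max})\neq0$ and $(m-1)a\psi(\theta_{\min})\neq0$, hence $a\neq0$ and both extreme values are nonzero. If they had opposite signs, the continuous function $(m-1)a\psi+\dot\psi$ would take values of opposite signs at $\theta_{\max}$ and $\theta_{\min}$ and would therefore vanish somewhere, contradicting the hypothesis. So $\psi(\theta_{\max})$ and $\psi(\theta_{\min})$ share a sign and $\psi(\theta)\neq0$ for all $\theta$. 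With that established, the rest of your argument goes through verbatim.
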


A small imperfection of Theorem \ref{thm1} is that only a concrete system with one limit cycle is shown during the proof of the theorem. To overcome this shortage and ensure the existence of the limit cycle, we give the following criterion with a definite condition.
\begin{prop}\label{prop2}
System \eqref{eq2} has at least $1$ limit cycle surrounding the origin if
\begin{align*}
b_n(\theta)b_m(\theta)>0,\indent \int^{2\pi}_0\frac{a_n(\theta)}{b_n(\theta)}d\theta\cdot\int^{2\pi}_0\frac{a_m(\theta)}{b_m(\theta)}d\theta<0.
\end{align*}
\end{prop}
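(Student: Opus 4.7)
The plan is to pass to generalized polar coordinates $x=r^{p}\cos\theta$, $y=r^{q}\sin\theta$. A direct computation using the quasi-homogeneity of $\bm X_n,\bm X_m$ and the definitions \eqref{eq3} of $a_i,b_i$ converts \eqref{eq2} on $\{(x,y)\neq(0,0)\}$ into the scalar first-order equation
\begin{equation*}
\frac{dr}{d\theta}\;=\;\frac{r}{m-n}\cdot\frac{a_n(\theta)+a_m(\theta)\,r^{m-n}}{b_n(\theta)+b_m(\theta)\,r^{m-n}}.
\end{equation*}
The hypothesis $b_n(\theta)b_m(\theta)>0$ forces the continuous $2\pi$-periodic functions $b_n,b_m$ to share a common strict sign, so the denominator $b_n+b_m r^{m-n}$ is nowhere zero on $S^{1}\times(0,\infty)$, the right-hand side is smooth there, and the Poincar\'e map $\Pi(r_0)\triangleq r(2\pi;r_0)$ is well-defined on $(0,\infty)$. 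Its fixed points are in bijection with closed orbits of \eqref{eq2} that surround the origin.

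Next I would extract the asymptotics of $\Pi$ at the two ends of its domain. Near $r_0=0$ the fraction above equals $\frac{a_n(\theta)}{b_n(\theta)}+O(r^{m-n})$ uniformly in $\theta$; a standard Gronwall estimate then yields
\begin{equation*}
\lim_{r_0\to 0^{+}}\frac{\Pi(r_0)}{r_0}\;=\;\exp\!\left(\frac{1}{m-n}\int_{0}^{2\pi}\frac{a_n(\theta)}{b_n(\theta)}\,d\theta\right).
\end{equation*}
Dually, the substitution $s=1/r$ brings the equation near $s=0$ to leading order $\frac{ds}{d\theta}=-\frac{a_m(\theta)}{(m-n)b_m(\theta)}\,s+O(s^{1+(m-n)})$, and the analogous Gronwall argument produces
\begin{equation*}
\lim_{r_0\to+\infty}\frac{\Pi(r_0)}{r_0}\;=\;\exp\!\left(\frac{1}{m-n}\int_{0}^{2\pi}\frac{a_m(\theta)}{b_m(\theta)}\,d\theta\right).
\end{equation*}

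Under the hypothesis of the proposition the two integrals above have opposite signs, so the two limiting ratios of $\Pi(r_0)/r_0$ lie strictly on opposite sides of $1$. Consequently the continuous displacement function $D(r_0)\triangleq\Pi(r_0)-r_0$ is strictly positive at one end of $(0,\infty)$ and strictly negative at the other, and the intermediate value theorem furnishes some $r^{*}\in(0,\infty)$ with $\Pi(r^{*})=r^{*}$. This fixed point produces a closed orbit $\gamma^{*}$ of \eqref{eq2} that encircles the origin. To upgrade $\gamma^{*}$ to a \emph{limit} cycle I would invoke analyticity of $\bm X$: the set of periodic orbits of an analytic planar vector field is a disjoint union of open period annuli and isolated closed trajectories, and the asymptotic spiralling deduced above prevents any period annulus from accumulating on the origin or on infinity. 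Hence at least one periodic orbit in the sign-change region of $D$ must be isolated, yielding the desired limit cycle.

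The step I expect to require the most care is the asymptotic analysis at $r_0\to+\infty$, where one has to rigorously justify the change of variables $s=1/r$, control the error terms uniformly in $\theta\in[0,2\pi]$, and ensure that a trajectory starting at very large $r_0$ stays in the regime where the linearisation is valid throughout a full period. Given the rational structure of the right-hand side and the strict sign condition $b_nb_m>0$, however, these estimates are routine.
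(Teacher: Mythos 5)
Your argument is correct, and at its core it computes the same two quantities as the paper: the characteristic exponents of the inner boundary $r=0$ and the outer boundary $r=\infty$, namely $\int_0^{2\pi}a_n/b_n\,d\theta$ and $\int_0^{2\pi}a_m/b_m\,d\theta$, followed by an intermediate-value argument on the displacement function. The difference is purely in the implementation. The paper first applies the Cherkas transformation $\rho=b_mr/(b_n+b_mr)$ (available precisely because $b_nb_m>0$), under which $r=0$ and $r=\infty$ become the genuine periodic solutions $\rho=0$ and $\rho=1$ of the Abel equation \eqref{eq6} on a compact strip in the $(\tau,\rho)$ variables; the two exponents are then read off as $\log\dot H(0)$ and $-\log\dot H(1)$ from the standard formula \eqref{eq40}, and the hypothesis says exactly that these two hyperbolic boundary cycles have the same stability, which forces a periodic orbit in $0<\rho<1$. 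You instead stay in the generalized polar equation and treat the outer end by the inversion $s=1/r$; since $b_m\neq0$ the $s$-equation extends smoothly to $s=0$ with $s=0$ invariant, so your limit at infinity is nothing but the derivative of the corresponding return map at $s=0$, and the Gronwall estimates you flag as delicate are in fact automatic from this smooth extension (in particular no solution starting at large $r_0$ can blow up on $[0,2\pi]$). The two routes are equivalent; the paper's buys the compactification for free from machinery already set up for Proposition \ref{prop3}, while yours avoids the Abel equation entirely and is slightly more self-contained. Your closing analyticity step --- the displacement function is analytic and not identically zero on the connected interval $(0,\infty)$, hence has isolated zeros, so the periodic orbit produced by the intermediate value theorem is genuinely a limit cycle --- is correct and makes explicit a point the paper leaves implicit.
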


An example with exactly $1$ limit cycle will be provided to illustrate the application of Theorem \ref{thm1} and Proposition \ref{prop2} while all the previous  criterions (I)-(IV) are invalid. See section 3 for details.

\vspace{0.3cm}
There are several powerful tools to study system \eqref{eq2} in the papers mentioned above.
One of them is Abel equation. In particular, the classical investigations for non-bifurcation cases are mainly or partially based on the Abel equation with coefficients of definite signs.
We would like to point out that,
Theorem \ref{thm1} is partially obtained by a result on the Abel equation with coefficients changing signs, which is firstly given in \cite{taubes37}.

On the other hand, the necessary prerequisite under which we can change  system \eqref{eq2} to the Abel equation,  is $b_n\neq 0$, see Lemma 13 in \cite{taubes29} for instance.
 When $b_n$ vanishes at some points,  we
have to investigate directly  the generalized-polar equation associated with \eqref{eq2}, which seems more difficult.
Usually, in order to control the maximum number of isolated periodic solutions
for such equations, an efficient way is  to find some suitable auxiliary functions (see for example, Theorem 5 in \cite{taubes1}, or Lemma 2.1 in \cite{taubes37}, or Proposition 2.2 in \cite{taubes18}).

So, the other purpose of this paper, is to  build   a bridge    between the auxiliary functions of the Abel equation
and the generalized-polar equation associated with \eqref{eq2}. More precisely, we will develop a formula on the divergence, such that if an auxiliary function $F$
of the Abel equation is known for the case $b_n(\theta)\neq 0$, then in the situation that $b_n(\theta)$ has zero points, we can easily obtain a corresponding formal auxiliary function of the generalized-polar equation, namely $\overline{F}$.
This work will be done in section 2 and the   formula on the divergence will be shown in  Lemma \ref{lem2}.

The organization of this paper is as follows: In section 2 we
provide several preliminary results. The proofs of Theorem \ref{thm1}, Corollary \ref{cor1} and Proposition \ref{prop2} will be given in section 3.

\section{Preliminaries}\label{an
apprpriate label}
For the sake of proving our main result, we need several preparations. We shall state them one by one in the following.

\subsection{A basic result for 1-dimensional non-autonomous differential equation}\label{an
apprpriate label}
Consider an equation
\begin{align}\label{eq10}
\frac{dx}{dt}=L(t,x),
\end{align}
where $L\in\mathrm C^{\infty}([0,\kappa]\times\mathbb R)$ and $\kappa$ is a positive constant. Denote by $x(t,x_0)$ the solution of \eqref{eq10} with $x(0,x_0)=x_0$.

$x(t,x_0)$ is called a {\em periodic solution} of \eqref{eq10}, if it is defined in $[0,\kappa]$ with $x(\kappa,x_0)=x_0$. Moreover,
an orbit $x=x(t,x_0)$ in the strip $[0,\kappa]\times\mathbb R$ is called a {\em periodic orbit} (resp. {\em limit cycle}),
if $x(t,x_0)$ is a periodic solution (resp. isolated periodic solution) of the equation.

We call $H(x_0)=x(\kappa,x_0)$ the return map of \eqref{eq10}. It is well-known that
\begin{align}\label{eq40}
\begin{split}
&\dot{H}(x_0)=\exp\int^{\kappa}_0\frac{\partial L}{\partial x}\big(t,x(t,x_0)\big)dt,
\end{split}
\end{align}
where $\dot{}$ represent the first-order derivative (see Lloyd \cite{taubes4} for instance).
When $x(t,x_0)$ is periodic and $\dot H(x_0)\neq1$, $x=x(t,x_0)$ is called a hyperbolic limit cycle.

\begin{lem}\label{lem3}
Let $U$ be a planar region represented by $U=\big\{(t,x)\big|t\in[0,\kappa], x\in\big(c_1(t),c_2(t)\big)\big\}$, where $c_i\in\mathrm C^{0}([0,\kappa])\bigcup\{+\infty,-\infty\}$, $c_i(0)=c_i(\kappa)$ and $i=1,2$.
Let $F\in\mathrm C^1(U)$ with
$F(\kappa,x)=F(0,x)$.
Assume that
\begin{align*}
G(t,x)\triangleq
\frac{\partial L}{\partial x}(t,x)+\frac{\partial F}{\partial t}(t,x)+\frac{\partial F}{\partial x}(t,x)\cdot L(t,x),
\end{align*}
where $(t,x)\in U.$
\begin{itemize}
\item[(i)] If $x=x(t)$ is a periodic orbit of \eqref{eq10} in $U$, then
\begin{align}\label{eq39}
\begin{split}
\int^{\kappa}_{0}\frac{\partial L}{\partial x}\big(t,x(t)\big)dt=\int^{\kappa}_{0}G\big(t,x(t)\big)dt.
\end{split}
\end{align}
\item[(ii)] If $G\big|_{U}\geq0$ (resp. $\leq0$) and there exists a non-empty open set $E\subseteq[0,1]$ such that
$G\big|_{U\cap(E\times\mathbb R)}\neq0$,
then \eqref{eq10} has at most $1$ periodic orbit in $U$, which is hyperbolic unstable (resp. stable).
\end{itemize}
\end{lem}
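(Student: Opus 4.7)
The plan is to dispatch (i) by a one-line chain-rule calculation that exploits both periodicities, and to obtain (ii) by feeding (i) into \eqref{eq40} and running a divergence-theorem argument on the region bounded by two hypothetical periodic orbits.

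For (i), along any solution $x=x(t)$ of \eqref{eq10} one has $\dot x(t)=L(t,x(t))$, and the chain rule gives
\begin{equation*}
\frac{d}{dt}F(t,x(t))=\frac{\partial F}{\partial t}(t,x(t))+\frac{\partial F}{\partial x}(t,x(t))\,L(t,x(t)),
\end{equation*}
so that $G(t,x(t))=\tfrac{\partial L}{\partial x}(t,x(t))+\tfrac{d}{dt}F(t,x(t))$. Integrating from $0$ to $\kappa$, the telescoping contribution $F(\kappa,x(\kappa))-F(0,x(0))$ vanishes, since $x(\kappa)=x(0)$ and $F(\kappa,\cdot)=F(0,\cdot)$; this is precisely \eqref{eq39}.

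For (ii), combining (i) with \eqref{eq40} yields, for every periodic orbit $x(t,x_0)\subset U$,
\begin{equation*}
\dot H(x_0)=\exp\!\int_0^\kappa G\bigl(t,x(t,x_0)\bigr)\,dt,
\end{equation*}
so under $G\ge 0$ one already has $\dot H(x_0)\ge 1$. To obtain uniqueness and strict hyperbolicity I would argue by contradiction: suppose two distinct periodic orbits $x_1(t)<x_2(t)$ lie in $U$, let $D$ be the compact region bounded by them together with the vertical cross-sections $\{t=0\}$ and $\{t=\kappa\}$, and apply the divergence theorem to the auxiliary vector field $V=(e^F,e^F L)$ in the $(t,x)$-plane. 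Its divergence is exactly $e^F G$. On the orbital pieces of $\partial D$, $V=e^F(1,L)$ is tangent to the orbit and so contributes zero flux; on the two vertical cross-sections the flux contributions cancel thanks to $x_i(\kappa)=x_i(0)$ together with $F(\kappa,\cdot)=F(0,\cdot)$. Hence $\iint_D e^F G\,dt\,dx=0$, and since $e^F>0$ and $G\ge 0$ this forces $G\equiv 0$ on $D$. An analogous computation on a sufficiently thin invariant strip adjacent to a single periodic orbit upgrades $\dot H(x_0)\ge 1$ to the strict inequality $\dot H(x_0)>1$, giving hyperbolic instability (the stable case is identical after a sign flip).

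The step I expect to be most delicate is extracting a genuine contradiction from ``$G\equiv 0$ on $D$'' using only the hypothesis that $G$ does not vanish identically on $U\cap(E\times\mathbb R)$. Since $D$ projects onto the whole of $[0,\kappa]$ and therefore meets $E$, one wants to place a point of positive $G$ inside $D$; depending on how one reads the non-vanishing assumption, this may require an extremality argument (e.g.\ choosing $x_1$ and $x_2$ to be the outermost pair of periodic orbits and invoking continuity of $G$) or simply interpreting the hypothesis pointwise, $G(t,x)>0$ throughout $U\cap(E\times\mathbb R)$, in which case any periodic orbit in $U$ automatically threads through a region where $G>0$, yielding $\int_0^\kappa G>0$ directly.
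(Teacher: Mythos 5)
Your part (i) is exactly the intended argument (note that the paper does not actually write out a proof of this lemma; it only points to Lemma 2.1 of the reference \cite{taubes37}, which proceeds by the same chain-rule computation): along a periodic orbit the extra terms integrate to $F(\kappa,x(\kappa))-F(0,x(0))=0$. For part (ii) your Green's-theorem computation with $V=(e^F,e^FL)$ on the region $D$ between two hypothetical periodic orbits is correct as far as it goes ($\operatorname{div}V=e^FG$, zero flux across the orbits, cancelling flux across $t=0$ and $t=\kappa$, hence $G\equiv 0$ on $D$), but you are right that the delicate point is how to contradict this, and you should commit to the pointwise reading of the hypothesis: ``$G\big|_{U\cap(E\times\mathbb R)}\neq 0$'' must mean that $G$ vanishes nowhere on $U\cap(E\times\mathbb R)$, hence $G>0$ there. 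Under the weaker reading (``$G$ is not identically zero on that set'') the statement is simply false --- take $L(t,x)=h(x)$ with $h\equiv 0$ for $x\le 1$ and $h,h'>0$ for $x>1$, $F\equiv 0$: then $G=h'\ge 0$, $G$ is not identically zero on any vertical strip, yet every constant $x\le 1$ is a periodic orbit --- and your proposed fallback of choosing the ``outermost'' pair of orbits does not repair this, since the set where $G>0$ may lie entirely outside $D$. With the pointwise reading, the whole Green's-theorem detour is unnecessary: every periodic orbit in $U$ is defined over all of $[0,\kappa]$ and hence passes through $U\cap(E\times\mathbb R)$, so $\int_0^\kappa G(t,x(t))\,dt>0$ and, by (i) together with \eqref{eq40}, $\dot H(x_0)>1$; thus every periodic orbit in $U$ is hyperbolic unstable, and two such orbits cannot coexist (between two consecutive fixed points of $H$ with $\dot H>1$ there would have to be one with $\dot H\le 1$, the intermediate region staying in $U$ because each slice $\{t\}\times(c_1(t),c_2(t))$ is an interval). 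This is the one-line finish you sketch in your last sentence; adopting it as the main argument, rather than a fallback, gives a complete and correct proof.
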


The proof is easy and follows in a way similar to the proof of Lemma 2.1 in \cite{taubes37}.

\subsection{An estimate for the number of limit cycles of Abel equation}\label{an
apprpriate label}
Equation \eqref{eq10} is called Abel equation, if
\begin{align}\label{eq14}
L(t,x)=A(t)x^3+B(t)x^2+C(t)x\in\mathrm C^{\infty}([0,\kappa]\times\mathbb R).
\end{align}
Abel equation is one of the powerful tools to study system \eqref{eq2}. In what follows we state a second result in \cite{taubes37}, which is  essentially obtained by Lemma \ref{lem3}.

Let $W(\cdot,\cdot)$ represents the Wronskian determinant for two functions.
\begin{thm}\label{lem1}
Suppose that $L$ is defined as in \eqref{eq14} with $\kappa=1$. Suppose there exist two smooth functions $\lambda_1(t)>\lambda_2(t)$ such that $\lambda_i(t)\neq0$, $\lambda_i(1)=\lambda_i(0)$, $L(t,\lambda_i)-\dot{\lambda_i}$ does not change signs for $i=1,2$, and
\begin{align*}
4\lambda_1\lambda_2\left(L(t,\lambda_1)-\dot{\lambda_1}\right)\left(L(t,\lambda_2)-\dot{\lambda_2}\right)
+W^2(\lambda_1,\lambda_2)\leq0.
\end{align*}
Then \eqref{eq10} has at most $2$ non-zero limit cycles, counted with multiplicities.
\end{thm}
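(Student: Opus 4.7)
The strategy is to derive the bound by applying Lemma~\ref{lem3}(ii) on appropriately chosen periodic strips of the cylinder $[0,1]\times\mathbb{R}$, with an auxiliary function $F(t,x)$ tailored to the two curves $\lambda_1(t)$ and $\lambda_2(t)$. First, I would use the hypothesis that each $\mu_i(t):=L(t,\lambda_i(t))-\dot\lambda_i(t)$ has a constant sign to observe that the curve $x=\lambda_i(t)$ acts as a one-sided barrier for the flow: a periodic solution of the Abel equation can touch $\{x=\lambda_i\}$ at most tangentially at isolated zeros of $\mu_i$, and cannot oscillate across it. Combined with the trivial Abel solution $x\equiv 0$, these three $1$-periodic curves partition the cylinder into a small number of periodic strips $U_j$ to which Lemma~\ref{lem3} may be applied separately, and every non-zero periodic orbit is confined to a single $U_j$.

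Next, on each strip I would construct $F(t,x)=\ln|w(t,x)|$, where $w$ is a product of powers of $x$, $x-\lambda_1$, and $x-\lambda_2$ whose exponents sum to $-3$. This choice makes $(\partial F/\partial x)\,L$ cancel the dominant $3Ax^2$ coming from $\partial L/\partial x$ at infinity, and a direct calculation shows that the remaining pieces of
\[
G(t,x)=\frac{\partial L}{\partial x}+\frac{\partial F}{\partial t}+\frac{\partial F}{\partial x}\,L
\]
collapse onto an algebraic expression whose sign is governed by a quadratic form with coefficients in $\lambda_1,\lambda_2,\mu_1,\mu_2$ and $W(\lambda_1,\lambda_2)=\lambda_1\dot\lambda_2-\lambda_2\dot\lambda_1$. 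The hypothesis $4\lambda_1\lambda_2\mu_1\mu_2+W^2(\lambda_1,\lambda_2)\le 0$ is precisely the discriminant condition forcing this quadratic to be sign-definite on $U_j$ (and not identically zero, thanks to $\lambda_1>\lambda_2$), so $G$ is sign-definite there.

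Applying Lemma~\ref{lem3}(ii) strip by strip then yields at most one non-zero hyperbolic periodic orbit per $U_j$, each of multiplicity one. The sign pattern of $(\mu_1,\mu_2)$ forced by the inequality (which, combined with $W^2\ge 0$, imposes $\lambda_1\lambda_2\mu_1\mu_2\le 0$) excludes periodic orbits from the remaining strips: in those strips the one-sided barrier directions at $x=\lambda_1$ and $x=\lambda_2$, together with the direction enforced at $x=0$, are incompatible with the periodicity of any candidate orbit. Summing the contributions over the $U_j$ that survive this exclusion gives the advertised bound of at most $2$ non-zero limit cycles, counted with multiplicity.

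The principal obstacle is the algebraic design of the exponents in $w$, so that after the cancellation of the leading quadratic-at-infinity contribution the remaining expression really does assemble into a quadratic whose discriminant matches $4\lambda_1\lambda_2\mu_1\mu_2+W^2(\lambda_1,\lambda_2)$; the matching is delicate and has to be done consistently on each strip. A secondary technical point is the degenerate subcase in which $\mu_i$ or the discriminant vanishes on a set of positive measure, where strict sign-definiteness must be replaced by the open-set condition in Lemma~\ref{lem3}(ii), and the hyperbolicity supporting the multiplicity count has to be recovered with extra care.
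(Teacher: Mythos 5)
Your central construction coincides with the paper's. The paper (quoting \cite{taubes37}) takes exactly the Lagrange--interpolation choice $F=-\ln\big|x(x-\lambda_1)(x-\lambda_2)/(\lambda_1\lambda_2)\big|$, i.e.\ all three exponents in your $w$ equal to $-1$, and the resulting $G$ of Lemma \ref{lem3} is $f\cdot I_L/(\lambda_1-\lambda_2)$ with
\begin{align*}
I_L=-\frac{\mu_1}{\lambda_1(x-\lambda_1)^2}+\frac{\mu_2}{\lambda_2(x-\lambda_2)^2}
-\frac{W(\lambda_1,\lambda_2)}{\lambda_1\lambda_2(x-\lambda_1)(x-\lambda_2)},
\qquad \mu_i=L(t,\lambda_i)-\dot\lambda_i,
\end{align*}
a quadratic form in $\big(1/(x-\lambda_1),1/(x-\lambda_2)\big)$ whose discriminant equals $\big(4\lambda_1\lambda_2\mu_1\mu_2+W^2\big)/(\lambda_1\lambda_2)^2$. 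So your reading of the hypothesis as a semi-definiteness condition is correct, and Lemma \ref{lem3} gives at most one limit cycle, counted with multiplicity, in each connected component of $\{x\neq0,\lambda_1,\lambda_2\}$.

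The genuine gap is in your final counting step. The three disjoint periodic curves $x=0$, $x=\lambda_1$, $x=\lambda_2$ cut the cylinder into \emph{four} strips, and $x=\lambda_i$ may itself be a limit cycle when $\mu_i\equiv0$, so the component-wise argument only yields ``at most six'' nonzero limit cycles; this is precisely where the paper ceases to be self-contained and invokes ``further analysis (including bifurcation method and comparison principle)'' from \cite{taubes37}. Your proposed mechanism for discarding the surplus strips --- that the one-sided crossing directions at $x=\lambda_1$, $x=\lambda_2$ together with the invariance of $x=0$ are ``incompatible with periodicity'' there --- does not work: the crossing direction on the boundary of a strip imposes no constraint on a periodic orbit lying strictly in its interior, and a strip whose two boundaries are both crossed inward is positively invariant, so when it is compact its return map has a fixed point and the strip \emph{must} contain a periodic orbit. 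The correct reduction has to combine the per-strip uniqueness with the \emph{stability} type delivered by Lemma \ref{lem3}(ii) (the sign of $G$ determines whether the unique cycle is attracting or repelling), the boundary crossing directions, and a comparison/perturbation argument; and the totally degenerate case $\mu_1\equiv\mu_2\equiv W\equiv0$ must be disposed of separately, as the paper does first. As written, your argument establishes a bound of six, not the claimed two.
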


We would like to sketch the proof of Theorem \ref{lem1}, which is helpful to obtain our main result.

Firstly, it is not hard to verify that the conclusion is true if $L(t,\lambda_1)-\dot{\lambda_1}=L(t,\lambda_2)-\dot{\lambda_2}=W(\lambda_1,\lambda_2)\equiv0$.

When $L(t,\lambda_1)-\dot{\lambda_1}$, $L(t,\lambda_2)-\dot{\lambda_2}$ and $W(\lambda_1,\lambda_2)$ are not all identically zero, the key point is to find a suitable function $F(t,x)$ and then apply Lemma \ref{lem3}. Define $f(t,x)=(x-\lambda_1)(x-\lambda_2)x$. Based on the Lagrange interpolation formula, the authors in \cite{taubes37} take
\begin{align}\label{eq11}
F(t,x)=-\ln\left|\frac{f(t,x)}{\lambda_1\lambda_2}\right|,\indent (t,x)\in\big([0,1]\times\mathbb R\big)\setminus\big\{(t,x)\big|x=0,\lambda_1,\lambda_2,\ t\in[0,1]\big\}.
\end{align}
Therefore a direct calculation shows that
\begin{align}\label{eq12}
\begin{split}
\frac{\partial L}{\partial x}&(t,x)+\frac{\partial F}{\partial t}(t,x)+\frac{\partial F}{\partial x}(t,x)\cdot L(t,x)
=f(t,x)\cdot\frac{I_L(t,x)}{\lambda_1-\lambda_2},
\end{split}
\end{align}
where
\begin{align*}
I_L(t,x)=\sum^{2}_{i=1}\frac{(-1)^i\left(L(t,\lambda_i)-\dot{\lambda_i}\right)}{\lambda_i}\cdot\frac{1}{(x-\lambda_i)^2}
-\frac{W(\lambda_1,\lambda_2)}{\lambda_1\lambda_2}\cdot\frac{1}{(x-\lambda_1)(x-\lambda_2)}.
\end{align*}
It is easy to prove by assumption that $I_L\big|_{U}\geq0(\leq0)$, where $U$ represents an arbitrary connected component of $\big\{(t,x)\big|x\neq0,\lambda_1,\lambda_2,\ t\in[0,1]\big\}$. Moreover, there exists a non-empty open set $E\subseteq[0,1]$ such that
$I_L\big|_{U\cap(E\times\mathbb R)}\neq0$. Note that $f|_U\neq0$. According to Lemma \ref{lem3}, $U$ contains at most $1$ limit cycles of \eqref{eq10}, counted with multiplicity.

As a result, the number of non-zero limit cycles of \eqref{eq10} is no more than $6$. By virtue of further analysis (including bifurcation method and comparison principle), this upper bound can be reduced to $2$ and it is sharp. We obtain Theorem \ref{lem1}. For more details see \cite{taubes37}.
\vskip 0.3cm

Now let us consider system \eqref{eq2}. Take $$(x,y)=\left(r^{p/(m-n)}\cos(\theta),r^{q/(m-n)}\sin(\theta)\right).$$ The system becomes
\begin{align}\label{eq4}
\begin{split}
&\frac{dr}{dt}=\frac{a_n(\theta)+a_m(\theta)r}{p\cos^2(\theta)+q\sin^2(\theta)}\cdot r^{1+(n-1)/(m-n)},\\
&\frac{d\theta}{dt}=\frac{b_n(\theta)+b_m(\theta)r}{p\cos^2(\theta)+q\sin^2(\theta)}\cdot r^{(n-1)/(m-n)}.
\end{split}
\end{align}
It is known that the limit cycles surrounding the origin of system \eqref{eq2} do not intersect the curve $b_n(\theta)+b_m(\theta)r=0$ (see \cite{taubes29}), i.e. they are located in region
\begin{align}\label{eq16}
V\triangleq\big([0,2\pi]\times\mathbb R^+\big)\backslash\big\{(\theta,r)\big|b_n(\theta)+b_m(\theta)r=0\big\}.
\end{align}
Therefore, these limit cycles can be investigated by  equation
\begin{align}\label{eq5}
\frac{dr}{d\theta}=R(\theta,r)=\frac{a_n(\theta)r+a_m(\theta)r^2}{b_n(\theta)+b_m(\theta)r},\indent \theta\in[0,2\pi],\ r\in\mathbb R^+.
\end{align}

Furthermore, when $b_nb_m\neq0$, using the transformation originated
from Cherkas \cite{taubes28} and Coll et al. \cite{taubes29},
\begin{align}\label{eq53}
\rho=\frac{b_m(\theta)r}{b_n(\theta)+b_m(\theta)r}, \ \ \theta=2\pi\tau,
\end{align}
equation \eqref{eq5} is reduced to an Abel equation
\begin{align}\label{eq6}
\frac{d\rho}{d\tau}=S(\tau,\rho)\triangleq\alpha_3\big(\theta(\tau)\big)\rho^3+\alpha_2\big(\theta(\tau)\big)\rho^2+\alpha_1\big(\theta(\tau)\big)\rho,
\end{align}
where
\begin{align}\label{eq7}
\begin{split}
&\alpha_3(\theta)=2\pi\frac{a_n(\theta)b_m(\theta)-a_m(\theta)b_n(\theta)}{b_n(\theta)b_m(\theta)},\\
&\alpha_2(\theta)=2\pi\frac{a_m(\theta)b_n(\theta)-2a_n(\theta)b_m(\theta)+W(b_m,b_n)}{b_n(\theta)b_m(\theta)},\\
&\alpha_1(\theta)=2\pi\frac{a_n(\theta)b_m(\theta)-W(b_m,b_n)}{b_n(\theta)b_m(\theta)}.
\end{split}
\end{align}

Take
\begin{align}\label{eq13}
\lambda_1(\tau)=1,\ \ \lambda_2(\tau)=\varepsilon,\indent \varepsilon\in(-1/2,1/2)\backslash\{0\}.
\end{align}
We obtain the next proposition by Theorem \ref{lem1}.
\begin{prop}\label{prop3}
Under assumption of Theorem \ref{thm1}, if $b_n\neq0$, then system \eqref{eq2} has at most $1$ limit cycle surrounding the origin, counted with multiplicity. This upper bound is sharp.
\end{prop}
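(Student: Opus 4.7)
My plan is to deduce Proposition \ref{prop3} directly from Theorem \ref{lem1} applied to the Abel equation \eqref{eq6} coming from the Cherkas change of variables \eqref{eq53}. The hypothesis $\varPhi\neq 0$ forces $b_m\neq 0$ on $[0,2\pi]$, and the additional assumption $b_n\neq 0$ means that both $b_n$ and $b_m$ are of constant sign. Consequently \eqref{eq53} is a diffeomorphism from $V$ onto an open subset of $[0,1]\times\mathbb R$ that avoids both of the constant solutions $\rho\equiv 0$ and $\rho\equiv 1$ of \eqref{eq6} (these correspond respectively to $r=0$ and $r=\infty$). Thus limit cycles of \eqref{eq2} surrounding the origin are in bijection with non-trivial limit cycles of \eqref{eq6} distinct from $\rho\equiv 1$.

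I would then verify the hypotheses of Theorem \ref{lem1} with the choice \eqref{eq13}, i.e.\ $\lambda_1(\tau)\equiv 1$ and $\lambda_2(\tau)\equiv\varepsilon$ for $|\varepsilon|$ sufficiently small. A direct computation from \eqref{eq7} gives $\alpha_1+\alpha_2+\alpha_3\equiv 0$, hence $S(\tau,1)\equiv 0$ and $S(\tau,\lambda_1)-\dot\lambda_1\equiv 0$ does not change sign. Next, rewriting $\dot{(b_n/b_m)}=W(b_m,b_n)/b_m^2$ yields the identity $\alpha_1=2\pi b_m\varPhi/b_n$, which under $b_n,b_m,\varPhi\neq 0$ is of a single sign on $[0,2\pi]$. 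Since $S(\tau,\varepsilon)=\varepsilon\bigl(\alpha_1+\alpha_2\varepsilon+\alpha_3\varepsilon^2\bigr)$, for $|\varepsilon|$ small enough $S(\tau,\lambda_2)-\dot\lambda_2$ has constant sign as well. The remaining assumptions are immediate: $\lambda_1,\lambda_2$ are non-zero constants with $\lambda_1>\lambda_2$, $W(\lambda_1,\lambda_2)=0$, and the displayed inequality of Theorem \ref{lem1} collapses to $0\leq 0$.

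Theorem \ref{lem1} then bounds the total number of non-zero limit cycles of \eqref{eq6} by two, counted with multiplicity. The return map of \eqref{eq6} is analytic in the initial datum, so either it is the identity — in which case \eqref{eq6} has no limit cycles at all and the statement is vacuous — or $\rho\equiv 1$ is an isolated zero of the displacement function and hence contributes algebraic multiplicity at least one to the budget of two. In the latter case, the total multiplicity of all remaining non-trivial limit cycles of \eqref{eq6} is at most one, and pulling back via the Cherkas diffeomorphism gives at most one limit cycle of \eqref{eq2} surrounding the origin, counted with multiplicity. Sharpness can be displayed by choosing coefficients so that both the hypothesis of Theorem \ref{thm1} and the existence criterion of Proposition \ref{prop2} are simultaneously satisfied.

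The main delicate point is the bookkeeping at $\rho\equiv 1$: one must justify that this constant orbit really occupies one unit of Theorem \ref{lem1}'s multiplicity budget, which reduces to the standard dichotomy that a non-identically-zero analytic displacement function has isolated zeros of positive multiplicity. A secondary technical point, and the real bridge between the hypothesis on $\varPhi$ and the Abel framework, is the algebraic identification $\alpha_1=2\pi b_m\varPhi/b_n$ that translates $\varPhi\neq 0$ into the sign-definiteness of $\alpha_1$ needed in step (ii) above.
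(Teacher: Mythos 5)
Your proposal follows essentially the same route as the paper's own proof: the Cherkas transformation \eqref{eq53} to the Abel equation \eqref{eq6}, the choice $\lambda_1\equiv 1$, $\lambda_2\equiv\varepsilon$ with $|\varepsilon|$ small, the key identity $\alpha_1=2\pi b_m\varPhi/b_n$ giving the sign-definiteness of $S(\tau,\lambda_2)-\dot\lambda_2$, Theorem \ref{lem1} to bound the non-zero limit cycles of \eqref{eq6} by two, and the observation that $\rho\equiv1$ absorbs one unit of that budget while the image of $V$ avoids $\rho=0$ and $\rho=1$ (your explicit treatment of the dichotomy for a non-isolated $\rho\equiv1$ is a small refinement the paper leaves implicit). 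The only substantive difference is sharpness: the paper exhibits a concrete system, namely $\bm X_n=\big((x-y)(x^2+y^2)^l,(x+y)(x^2+y^2)^l\big)$, $\bm X_m=\big(-(x+y)(x^2+y^2)^{k},(x-y)(x^2+y^2)^{k}\big)$ with $k>l$, for which $x^2+y^2=1$ is a limit cycle, whereas you only indicate that an example could be produced via Proposition \ref{prop2}; to be complete you should write one down.
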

\begin{proof}
Clearly, $b_nb_m\neq0$ from assumption. In view of the above discussion, we only need to consider the limit cycles of equation \eqref{eq6}.

Let $\lambda_1$, $\lambda_2$ and $\varepsilon$ be defined as in \eqref{eq13}. Following a straightforward calculation,
\begin{align}\label{eq8}
\begin{split}
&W(\lambda_1,\lambda_2)=0,\\
& S(\tau,\lambda_1)-\frac{d\lambda_1}{d\tau}=0,\\
& S(\tau,\lambda_2)-\frac{d\lambda_2}{d\tau}=2\pi\left(\alpha_3(\theta)\varepsilon^2+\alpha_2(\theta)\varepsilon+
\frac{b_m(\theta)}{b_n(\theta)}\cdot\varPhi(\theta)\right)\varepsilon.
\end{split}
\end{align}
In addition, by assumption, for $0<|\varepsilon|<<1$,
\begin{align*}
\text{sgn}\left( S(\tau,\lambda_2)-\frac{d\lambda_2}{d\tau}\right)
=\text{sgn}\left(\varepsilon\cdot\frac{b_m(\theta)}{b_n(\theta)}\cdot\varPhi(\theta)\bigg)
\right)\neq0.
\end{align*}
Consequently, taking $\varepsilon\neq0$ sufficiently small, we know by Theorem \ref{lem1} that \eqref{eq6} has at most $2$ non-zero limit cycles, counted with multiplicities. In particular, \eqref{eq8} tells us that $\rho=1$ is one of them.

We emphasize that in $(\theta,r)$ coordinates, the limit cycles surrounding the origin of system \eqref{eq2} are all located in $V$, where $V$ is defined as in \eqref{eq16}. And transformation \eqref{eq53} sends $V$ to the region $U_1$ (resp. $U_2\cup U_3$) when $b_nb_m>0$ (resp. $b_nb_m<0$), where
\begin{align*}
&U_1=\big\{(\tau,\rho)\big|0<\rho<1,\ \tau\in[0,1]\big\},\\
&U_2=\big\{(\tau,\rho)\big|\rho<0,\ \tau\in[0,1]\big\},\\
&U_3=\big\{(\tau,\rho)\big|\rho>1,\ \tau\in[0,1]\big\}.
\end{align*}
Together with the above estimate for \eqref{eq6},
the number of limit cycles  surrounding the origin of system \eqref{eq2} is no more than $1$, counted with multiplicity.

Finally, take $n=2l+1$ and $m=2k+1$, where $k,l\in\mathbb Z^+\cup\{0\}$ and $k>l$.
Consider system \eqref{eq2} with
\begin{align*}
&\bm X_n=\left((x-y)\big(x^2+y^2\big)^l,(x+y)\big(x^2+y^2\big)^l\right),\\
&\bm X_m=\left(-(x+y)\big(x^2+y^2\big)^{k},(x-y)\big(x^2+y^2\big)^{k}\right).
\end{align*}
Then $\bm X_n$ and $\bm X_m$ are quasi-homogeneous with weight $(1,1)$.  A straightforward calculation shows that
\begin{align*}
a_n=2(k-l),\ a_m=-2(k-l),\ b_n=1,\ b_m=1.
\end{align*}
Thus, $\varPhi=2(k-l)>0$ and $b_n\neq0$. On the other hand, one can check that $x^2+y^2=1$ is a limit cycle of the system. Hence the upper bound is sharp.
\end{proof}

\subsection{A formula on the divergence}\label{an
apprpriate label}
Suppose that the assumption $b_n\neq0$ in Proposition \ref{prop3} is changed to $b_n=0$ for some $\theta\in[0,2\pi]$. Then equation \eqref{eq6} is not well-defined for some $\tau\in[0,1]$. For this reason, the estimate in the proposition is unable to be obtained by Abel equation and Theorem \ref{lem1}. We need to apply Lemma \ref{lem3} directly to the original equation \eqref{eq5}. The difficulty is still to find a suitable auxiliary function for the equation.

Nevertheless, the argument for the case $b_n\neq0$ provides us a clue to get this auxiliary function. To this end we need the following result.

\begin{lem}\label{lem2}
Let $T:V_1\rightarrow V_2$ be a diffeomorphism and $F\in\mathrm C^1(V_2)$, where the regions $V_1, V_2\subset\mathbb R^2$. Assume that $\bm Q$ and $\bm P$ are two vector fields on $V_1$ and $V_2$, respectively, with $\bm P\circ T=DT\cdot\bm Q$.
Then

\begin{align*}
(\mathrm{div}\bm P+D_{\bm P} F)\circ T
=\mathrm{div}\bm Q+D_{\bm Q}\overline{F}.
\end{align*}
where $D_{\bm Q}\cdot$ represents the directional derivative along $\bm Q$ and $\overline{F}=\ln|DT|+ F\circ T$.
\end{lem}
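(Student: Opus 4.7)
The plan is to split the target identity into its two transport pieces: the part involving $F$, which follows from a one-line application of the chain rule, and the part involving the divergence, which carries all the content and is precisely the reason why the correction term $\ln|DT|$ appears inside $\overline F$.

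First, from the defining relation $\bm P\circ T=DT\cdot\bm Q$ and the chain rule, $D(F\circ T)(y)=DF(T(y))\cdot DT(y)$, so contracting with $\bm Q(y)$ gives
$$D_{\bm Q}(F\circ T)(y)=DF(T(y))\cdot DT(y)\cdot \bm Q(y)=DF(T(y))\cdot\bm P(T(y))=(D_{\bm P}F)\circ T(y).$$
This disposes of the $F$ term immediately.

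The harder piece is the purely geometric identity
$$(\mathrm{div}\,\bm P)\circ T=\mathrm{div}\,\bm Q+D_{\bm Q}\ln|DT|,$$
which is exactly what the summand $\ln|DT|$ in $\overline F$ is designed to absorb. My preferred route is via the standard area form $\omega=dx_1\wedge dx_2$ on $V_2$, using the two identities $\mathcal L_{\bm P}\omega=(\mathrm{div}\,\bm P)\,\omega$ and $T^{*}\omega=(\det DT)\,dy_1\wedge dy_2$. Because the hypothesis $\bm P\circ T=DT\cdot\bm Q$ says that $\bm Q$ and $\bm P$ are $T$-related, Lie derivative commutes with pullback, hence $T^{*}(\mathcal L_{\bm P}\omega)=\mathcal L_{\bm Q}(T^{*}\omega)$. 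Expanding both sides, canceling the nonvanishing factor $\det DT$, and recognizing $D_{\bm Q}(\det DT)/\det DT$ as $D_{\bm Q}\ln|DT|$ gives the identity. A reader who prefers to avoid differential forms can obtain the same identity by a direct coordinate computation, writing $P_i\circ T=\sum_j(\partial_j T_i)\,Q_j$, differentiating, and using Jacobi's formula $\partial_k\det DT=\det DT\cdot\mathrm{tr}(DT^{-1}\partial_k DT)$ to collapse the resulting double sum.

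Adding the two pieces then yields
$$(\mathrm{div}\,\bm P+D_{\bm P}F)\circ T=\mathrm{div}\,\bm Q+D_{\bm Q}\ln|DT|+D_{\bm Q}(F\circ T)=\mathrm{div}\,\bm Q+D_{\bm Q}\overline F,$$
which is the claim. The only real obstacle is the change-of-divergence formula under a diffeomorphism; once that is in hand, the lemma is a single chain-rule line on top. Note that the diffeomorphism hypothesis is used precisely to guarantee $\det DT\neq 0$, so that the division producing $\ln|DT|$ is legitimate throughout $V_1$.
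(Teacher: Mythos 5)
Your proposal is correct, and it uses the same overall decomposition as the paper --- the chain-rule identity $D_{\bm P}F\circ T=D_{\bm Q}(F\circ T)$ plus the divergence-transport formula $(\mathrm{div}\,\bm P)\circ T=\mathrm{div}\,\bm Q+D_{\bm Q}\ln|DT|$ --- but you prove the latter by a genuinely different route. The paper differentiates the component relations $P_i\circ T=\sum_j (\partial T_i/\partial x_j)\,Q_j$ with respect to $x_1$ and $x_2$ to obtain four scalar identities, and then takes the specific linear combination $y_{2x_2}\cdot(1)-y_{1x_2}\cdot(2)-y_{2x_1}\cdot(3)+y_{1x_1}\cdot(4)$, whose coefficients are exactly the cofactors of $DT$, to produce $|DT|\,(\mathrm{div}\,\bm P)\circ T=D_{\bm Q}|DT|+|DT|\,\mathrm{div}\,\bm Q$ directly; this is entirely elementary and self-contained, at the cost of being a two-dimensional computation whose structure is only visible after the fact. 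You instead invoke $\mathcal L_{\bm P}\omega=(\mathrm{div}\,\bm P)\,\omega$, $T^{*}\omega=(\det DT)\,dy_1\wedge dy_2$, and the naturality of the Lie derivative under $T$-related vector fields, then cancel the nonvanishing factor $\det DT$; this is shorter, explains conceptually why the correction term is precisely $\ln|DT|$ (it is the logarithmic derivative of the density of the pulled-back area form), and generalizes verbatim to any dimension, at the cost of importing the differential-forms machinery. Both arguments implicitly require $T$ to be $C^2$ so that $\det DT$ is differentiable (the paper's second derivatives $y_{ix_jx_k}$, your $D_{\bm Q}\det DT$); this is harmless here since $T$ is the explicit smooth map \eqref{eq53}, but worth noting since the hypothesis as stated only says ``diffeomorphism.''
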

\begin{proof}
Denote by $\bm Q=(Q_1,Q_2)$, $\bm P=(P_1,P_2)$ and $\bm y=(y_1,y_2)=T(\bm x)=T(x_1,x_2)$. We have $P(\bm y)=\big(DT\cdot\bm Q\big)(\bm x)$. Hence a straightforward calculation shows that
\begin{align}\label{eq1.1}
&P_{1y_1}y_{1x_1}+P_{1y_2}y_{2x_1}=y_{1x_1x_1}Q_1+y_{1x_1x_2}Q_2+y_{1x_1}Q_{1x_1}+y_{1x_2}Q_{2x_1},\\ \label{eq2.1}
&P_{2y_1}y_{1x_1}+P_{2y_2}y_{2x_1}=y_{2x_1x_1}Q_1+y_{2x_1x_2}Q_2+y_{2x_1}Q_{1x_1}+y_{2x_2}Q_{2x_1},\\ \label{eq3.1}
&P_{1y_1}y_{1x_2}+P_{1y_2}y_{2x_2}=y_{1x_1x_2}Q_1+y_{1x_2x_2}Q_2+y_{1x_1}Q_{1x_2}+y_{1x_2}Q_{2x_2},\\ \label{eq4.1}
&P_{2y_1}y_{1x_2}+P_{2y_2}y_{2x_2}=y_{2x_1x_2}Q_1+y_{2x_2x_2}Q_2+y_{2x_1}Q_{1x_2}+y_{2x_2}Q_{2x_2}.
\end{align}
So, $y_{2x_2}\times$\eqref{eq1.1}$-y_{1x_2}\times$\eqref{eq2.1}$-y_{2x_1}\times$\eqref{eq3.1}$+y_{1x_1}\times$\eqref{eq4.1} leads to
\begin{align*}
|DT|\left(P_{1y_1}+P_{2y_2}\right)
=|DT|_{x_1}Q_1+|DT|_{x_2}Q_2+|DT|\left(Q_{1x_1}+Q_{2x_2}\right),
\end{align*}
which implies \begin{align*}
(\mathrm{div}\bm P)\circ T
=\mathrm{div}\bm Q+D_{\bm Q}\ln|DT|.
\end{align*}Furthermore, observe that
\begin{align*}
&D_{\bm P} F\circ T
=(\nabla F\circ T)\cdot(\bm P\circ T)
=(\nabla F\circ T)\cdot DT\cdot\bm Q
=\nabla(F\circ T)\cdot\bm Q
=D_{\bm Q}(F\circ T).
\end{align*}
The conclusion  of the lemma follows.
\end{proof}

Now we choose $T$ to be the transformation \eqref{eq53}, i.e.
\begin{align*}
T(\theta,r)=(\tau,\rho)=\left(\frac{\theta}{2\pi},\frac{b_m(\theta)r}{b_n(\theta)+b_m(\theta)r}\right).
\end{align*}
Take $V_1=V$ and $V_2=T(V)$, where $V$ is defined as in \eqref{eq16}.
Take $\bm P=\big(1, S(\tau,\rho)\big)$ and $\bm Q=\big(1,R(\theta,r)\big)$, i.e. the vector fields induced by equations \eqref{eq6} and \eqref{eq5}, respectively.

We begin to show the idea of finding a suitable auxiliary function for equation \eqref{eq5}.

Recall that Abel equation \eqref{eq6} and Theorem \ref{lem1} are the main tools during the proof of Proposition \ref{prop3} (i.e. the case $b_n\neq0$). In view of the sketch for the proof of theorem \ref{lem1}, an essential reason that Proposition \ref{prop3} holds is
\begin{align}\label{eq18}
\big(\mathrm{div}\bm P+D_{\bm P}F\big)(\tau,\rho)
=\frac{\partial S}{\partial \rho}&(\tau,\rho)+\frac{\partial F}{\partial \tau}(\tau,\rho)+\frac{\partial F}{\partial \rho}(\tau,\rho)\cdot S(\tau,\rho)
\neq0,
\end{align}
where $F$ is defined as in \eqref{eq11} with $\lambda_1=1$, $\lambda_2=\varepsilon$ and small $\varepsilon$.
On the other hand, under assumption of the proposition, we can verify that $T$ is a diffeomorphism from $V_1$ to $V_2$, and $F\in\mathrm C^1(V_2)$ for either $\varepsilon>0$ or $\varepsilon<0$. If we take $\varepsilon$ with suitable sign and denote
\begin{align*}
\overline F=\ln|DT|+F\circ T
=\ln|b_n+b_m r|-\ln|\varepsilon b_n-(1-\varepsilon)b_mr|-\ln|r|-\ln2\pi,
\end{align*}
then $\overline F$ is well-defined in $V_1=V$, and \eqref{eq18} and Lemma \ref{lem2} imply
\begin{align}\label{eq17}
\frac{\partial R}{\partial r}&\big(\theta,r\big)
+\frac{\partial \overline F}{\partial\theta}\big(\theta,r\big)+\frac{\partial \overline F}{\partial r}\big(\theta,r\big)\cdot R\big(\theta,r\big)
=\big(\mathrm{div}\bm Q+D_{\bm Q}\overline F\big)(\theta,r)\neq0.
\end{align}
Thus, the number of limit cycles of system \eqref{eq2} can also be estimated by using Lemma \ref{lem1} for equation \eqref{eq5} and $\overline F$ (although of the upper bound is not sharp, see Remark \ref{rem1}).

When the assumption $b_n\neq0$ in Proposition \ref{prop3} is changed to $b_n=0$ at some points, \eqref{eq17} can not be known from \eqref{eq18} and Lemma \ref{lem2} because the vector field $\bm P$ is not well-defined. Nevertheless, we will check by direct calculation. Considering that $\varepsilon$ is small enough in the previous case, we take $\varepsilon\rightarrow0$ for convenience, and therefore
\begin{align}\label{eq15}
\overline F\rightarrow\mathcal F\triangleq -\ln|b_m|+\ln|b_n+b_m r|-2\ln|r|-\ln2\pi.
\end{align}
Note that $\mathcal F$ is always well-defined in $V$. In what follows we apply Lemma \ref{lem3} directly to equation \eqref{eq5} with function $\mathcal F$, and give the next proposition.

\begin{prop}\label{prop4}
Under assumption of Theorem \ref{thm1}, if $b_n$ has zero points, then system \eqref{eq2} has at most $1$ limit cycle surrounding the origin, counted with multiplicity. This upper bound is sharp.
\end{prop}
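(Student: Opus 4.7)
The plan is to apply Lemma \ref{lem3} directly to equation \eqref{eq5} with $\kappa=2\pi$ and the auxiliary function $\mathcal F$ from \eqref{eq15}, whose choice is motivated by the Lemma \ref{lem2} construction together with the formal limit $\varepsilon\to 0$. The hypothesis $\varPhi\not\equiv 0$ tacitly requires $b_m\neq 0$ throughout $[0,2\pi]$, and $b_n+b_mr\neq 0$ on $V$ by construction of $V$, so $\mathcal F\in\mathrm C^1(V)$ is $2\pi$-periodic in $\theta$. A routine calculation of
\[
G(\theta,r)=\frac{\partial R}{\partial r}+\frac{\partial\mathcal F}{\partial\theta}+\frac{\partial\mathcal F}{\partial r}\cdot R,
\]
in which the terms proportional to $1/(b_n+b_mr)^2$ cancel and the identity $a_nb_m-W(b_m,b_n)=b_m^{2}\varPhi$ (just the definition of $\varPhi$ cleared of denominators) is invoked, should produce
\[
G(\theta,r)=-\,\frac{b_m(\theta)\,\varPhi(\theta)}{b_n(\theta)+b_m(\theta)\,r},
\]
which is precisely the divergence-side counterpart, via Lemma \ref{lem2}, of the Abel-equation integrand appearing in \eqref{eq18}.

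The decisive step is to confine every origin-surrounding limit cycle of \eqref{eq2} to a sub-region of $V$ of the form admitted by Lemma \ref{lem3}. Along any such cycle $r=r(\theta)>0$, the quantity $b_n+b_mr$ is continuous and nonvanishing, hence has a definite sign. Here the hypothesis that $b_n$ vanishes at some $\theta_0\in[0,2\pi]$ is decisive: at $\theta_0$ the expression collapses to $b_m(\theta_0)\,r(\theta_0)$, forcing $\operatorname{sgn}(b_n+b_mr)\equiv\operatorname{sgn}(b_m)$ on the whole cycle. Consequently every such limit cycle lies in
\[
U=\bigl\{(\theta,r)\,:\,\theta\in[0,2\pi],\ r>\max\{0,-b_n(\theta)/b_m(\theta)\}\bigr\},
\]
which is of the Lemma \ref{lem3} form with $c_1(\theta)=\max\{0,-b_n/b_m\}$ continuous and $2\pi$-periodic and $c_2\equiv+\infty$. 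On $U$, $b_n+b_mr$ has the sign of $b_m$, so $G$ has the sign of $-\varPhi$, which is definite and not identically zero. Lemma \ref{lem3}(ii) then delivers at most one periodic orbit of \eqref{eq5} in $U$, equivalently at most one limit cycle of \eqref{eq2} surrounding the origin, and it is hyperbolic. The stability assertion of Theorem \ref{thm1} follows by combining the sign of $G$ on $U$ with the orientation factor $\operatorname{sgn}(d\theta/dt)=\operatorname{sgn}(b_n+b_mr)=\operatorname{sgn}(b_m)$, yielding stability in real time precisely when $b_m\varPhi>0$.

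Sharpness will be established by exhibiting one concrete instance of \eqref{eq2} in which $b_n$ vanishes at some angle and the system carries exactly one limit cycle, obtained by a small modification of the example at the end of the proof of Proposition \ref{prop3} so as to introduce a simple zero of $b_n$ while preserving the explicit closed orbit. The main obstacle is not the calculation of $G$ itself but the region-selection argument in the second paragraph: the zero of $b_n$ is exactly what pins every origin-surrounding limit cycle onto the outer side of the curve $r=-b_n/b_m$, so that a single Lemma-\ref{lem3} region $U$ captures them all. Without such a zero, $V$ can split into several wrap-around components each potentially carrying its own limit cycle, and the bound $1$ cannot be obtained from Lemma \ref{lem3} directly; this is why Proposition \ref{prop3} had to go through the Abel equation rather than through $\mathcal F$.
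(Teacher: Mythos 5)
Your proposal follows essentially the same route as the paper's proof: the auxiliary function $\mathcal F$ from \eqref{eq15}, the computation $G=-b_m\varPhi/(b_n+b_mr)$, and the confinement of every origin-surrounding cycle to the single region $U=\bigl\{(\theta,r): r>\max\{0,-b_n(\theta)/b_m(\theta)\}\bigr\}$ using the zero of $b_n$, followed by Lemma \ref{lem3}(ii). The only piece left unfinished is the sharpness example, which the paper supplies explicitly (a weight-$(1,1)$ system with $b_n=\cos^2\theta$, $b_m=1$ and invariant circle $x^2+y^2=1$); otherwise the argument matches.
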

\begin{proof}
Due to the previous argument, we only need to consider the periodic orbits of equation \eqref{eq5} which are located in the region $V$.
Furthermore, observe that $b_m\neq0$. The curve $b_n(\theta)+b_m(\theta)r=0$ can be represented by $r=-b_n(\theta)/b_m(\theta)$. From assumption, this curve intersects the curve $r=0$. That is to say, the region which contains the periodic orbits can be reduced to a simply connected region $U=\big\{(\theta,r)\big|r>0, r>-b_n(\theta)/b_m(\theta)\big\}\subset V$.

Now let $\mathcal F$ be the function determined by \eqref{eq15}. Then for $(\theta,r)\in U$, a direct calculation shows that
\begin{align}\label{eq19}
\begin{split}
\frac{\partial R}{\partial r}&\big(\theta,r\big)
+\frac{\partial\mathcal F}{\partial\theta}\big(\theta,r\big)+\frac{\partial\mathcal F}{\partial r}\big(\theta,r\big)\cdot R\big(\theta,r\big)
=-\frac{b_m(\theta)\cdot\varPhi(\theta)}{b_n(\theta)+b_m(\theta)r}
\neq0.
\end{split}
\end{align}
Hence Lemma \ref{lem3} tells us that the number of limit cycles of \eqref{eq5} in $U$ is no more than $1$, counted with multiplicity. Consequently, system \eqref{eq2} has at most $1$ limit cycle surrounding the origin (counted with multiplicity).

In order to prove that the upper bound can be achieved, we consider system \eqref{eq2} with
\begin{align*}
&\bm X_n=\left(\big(x^3-x^2y+xy^2\big)\big(x^2+y^2\big)^l,\big(x^3+x^2y+y^3\big)\big(x^2+y^2\big)^l\right),\\
&\bm X_m=\left(-(x+y)\big(x^2+y^2\big)^{k+1},(x-y)\big(x^2+y^2\big)^{k+1}\right),\\
&n=2l+1,\ m=2k+1,\ k>l,\indent k,l\in\mathbb Z^+\cup\{0\}.
\end{align*}
Clearly, $\bm X_n$ and $\bm X_m$ are quasi-homogeneous with weight $(1,1)$. It is easy to get that
\begin{align*}
a_n=2(k-l),\ a_m=-2(k-l),\ b_n=\cos^2\theta,\ b_m=1.
\end{align*}
Therefore $\varPhi=2(k-l)+\sin(2\theta)>0$ and $b_n$ has zero points. On the other hand, one can check that $x^2+y^2=1$ is a limit cycle of the system. This means that the upper bound is reachable.
\end{proof}

\begin{rem}\label{rem1}
We would like to point out that, the method used in the proof of Proposition \ref{prop4}, is not powerful enough to prove Proposition \ref{prop3}. In fact, when $b_mb_n<0$, $V$ is a region with two connected components. Therefore we can only know that system \eqref{eq2} has at most $2$ limit cycles surrounding the origin. This is why we have to apply Abel equation and Theorem \ref{lem1}.
\end{rem}

\subsection{The distribution of limit cycles of system \eqref{eq2}}
At the end of this section, we study the existence for the limit cycles which do not surround the origin.
\begin{prop}\label{prop1}
Under assumption of Theorem \ref{thm1}, any arbitrary periodic orbit of system \eqref{eq2} must surround the origin.
\end{prop}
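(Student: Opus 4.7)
The plan is to argue by contradiction using a Bendixson--Dulac type argument in generalized polar coordinates, exploiting the divergence identity that is already implicit in the proof of Proposition \ref{prop4}.

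Suppose for contradiction that $\gamma$ is a periodic orbit of \eqref{eq2} that does not surround the origin. The generalized polar transformation $(x,y)=(r^{p/(m-n)}\cos\theta,r^{q/(m-n)}\sin\theta)$ is a diffeomorphism from $\mathbb R^{2}\setminus\{0\}$ onto the cylinder $S^{1}\times\mathbb R^{+}$. Hence $\gamma$ lifts to a simple closed curve $\tilde\gamma$ in $S^{1}\times\mathbb R^{+}$. Since $\gamma$ does not enclose the origin, its lift $\tilde\gamma$ is null-homotopic in the cylinder, so it bounds a topological disk $D'$, and because $\gamma$ is compact in $\mathbb R^{2}\setminus\{0\}$, $\overline{D'}$ is bounded away from $\{r=0\}$. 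This step is where I expect the only real care is needed: making sure the closed region $\overline{D'}$ lies entirely in the open cylinder where the Dulac function below is smooth.

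Next, introduce the auxiliary vector field
$$\bm Y(\theta,r)=\bigl(b_{n}(\theta)+b_{m}(\theta)r,\,(a_{n}(\theta)+a_{m}(\theta)r)r\bigr),$$
which, by \eqref{eq4}, differs from the polar form of \eqref{eq2} only by the strictly positive scalar factor $r^{(n-1)/(m-n)}/(p\cos^{2}\theta+q\sin^{2}\theta)$. Orbits of $\bm Y$ and of \eqref{eq2} therefore coincide as unparametrized curves, and in particular $\tilde\gamma$ is a periodic orbit of $\bm Y$. Since $b_{m}$ never vanishes (otherwise $\varPhi$ would be undefined), $b_{m}$ has constant sign, so the Dulac function
$$B(\theta,r)=\frac{1}{b_{m}(\theta)\,r^{2}}$$
is smooth and of constant sign on $S^{1}\times\mathbb R^{+}$. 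A direct computation analogous to the one leading to \eqref{eq19} gives
$$\mathrm{div}(B\bm Y)=\frac{\partial}{\partial\theta}\!\left(\frac{b_{n}+b_{m}r}{b_{m}r^{2}}\right)+\frac{\partial}{\partial r}\!\left(\frac{a_{n}+a_{m}r}{b_{m}r}\right)=\frac{1}{r^{2}}\dot{\!\left(\frac{b_{n}}{b_{m}}\right)}-\frac{a_{n}}{b_{m}r^{2}}=-\frac{\varPhi(\theta)}{r^{2}}.$$
Because $\varPhi$ is continuous and nowhere zero, it has constant sign, and hence $\mathrm{div}(B\bm Y)$ is of constant nonzero sign on $S^{1}\times\mathbb R^{+}$.

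Finally, applying the divergence theorem on the disk $D'$ and using that $\bm Y$ (and therefore $B\bm Y$) is tangent to $\tilde\gamma$ yields
$$\iint_{D'}\mathrm{div}(B\bm Y)\,d\theta\,dr=\oint_{\tilde\gamma}B\bm Y\cdot\bm n\,ds=0,$$
while the left-hand side is nonzero by the previous paragraph. This contradiction shows that no periodic orbit of \eqref{eq2} can fail to surround the origin, which is the assertion of Proposition \ref{prop1}.
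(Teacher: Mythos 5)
Your proof is correct and follows essentially the same route as the paper: the paper's proof passes to the coordinates of \eqref{eq4} and uses the Dulac function $g=(p\cos^2\theta+q\sin^2\theta)/(b_m r^{2+(n-1)/(m-n)})$, and since $g\overline{\bm X}$ coincides with your $B\bm Y$, the divergence computation $\mathrm{div}(B\bm Y)=-\varPhi/r^2$ is identical. The only difference is that you spell out the topological step (the lift of a non-surrounding periodic orbit is null-homotopic in the cylinder and bounds a disk away from $r=0$) and the Green's theorem contradiction, which the paper leaves implicit.
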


\begin{proof}
Assume to the contrary that system \eqref{eq2} has a periodic orbit $\gamma$ which does not surround the origin. If we change the system into system \eqref{eq4}, then $\gamma$ in the $(\theta,r)$ coordinates is still a simple closed curve with $r>0$.

However, let $\overline{\bm X}$ be the vector field induced by \eqref{eq4}, and let
\begin{align*}
g(\theta,r)=\frac{p\cos^2\theta+q\sin^2\theta}{b_m(\theta)r^{2+{(n-1)/(m-n)}}}.
\end{align*}
It follows from assumption that $g$ is a smooth function in region $\{(\theta,r)|r>0\}$. In addition, for $r>0$ we have
\begin{align*}
\text{div}\left(g\overline{\bm X}\right)
=\frac{\partial }{\partial r}\left(\frac{a_n}{b_mr}+\frac{a_m}{b_m}\right)
+\frac{\partial}{\partial\theta}\left(\frac{b_n}{b_mr^2}+\frac{1}{r}\right)
=-\frac{\varPhi}{r^2}
\neq0.
\end{align*}
This implies that $\gamma$ does not exists, which shows a contradiction. As a result, our conclusion is true.
\end{proof}

\section{Proof of the main results}\label{an
apprpriate label}
There are two  goals  in this section. The first  is the proof of  Theorem \ref{thm1} and the second is  giving  some examples to illustrate the application
of our results when the previous criteria presented in (I)-(IV) are invalid.

\begin{proof}[Proof of Theorem \ref{thm1}]
The maximal number and distribution of the limit cycles of system
\eqref{eq2} is obtained directly by Proposition \ref{prop3},
Proposition \ref{prop4} and Proposition \ref{prop1}.

For the stability of the limit cycle of the system we again consider equation \eqref{eq5}.
Note that under assumption, \eqref{eq19} actually always holds in $V$.
Without loss of generality, suppose that $b_m\varPhi>0$ (it is a similar argument for the case $b_m\varPhi<0$). Then by Lemma \ref{lem3}, the limit cycle of \eqref{eq5} is stable (resp. unstable) if it is located in $V^+=\big\{(\theta,r)\big|b_n(\theta)+b_m(\theta)r>0\big\}$ (resp. $V^-=\big\{(\theta,r)\big|b_n(\theta)+b_m(\theta)r<0\big\}$). Observe that $d\theta/dt>0$ (resp. $<0$) in $V^+$ (resp. $V^-$) from \eqref{eq4}.
The limit cycle in $V^+$ (resp. $V^-$), as solutions of equation \eqref{eq5} and system \eqref{eq4} (i.e. \eqref{eq2}), has the  same (resp. opposite) stabilities. Hence, the limit cycle of \eqref{eq2} is stable.
\end{proof}

{\bf{Example 1.}}
Consider planar system
\begin{align}\label{eq16.3}
\begin{split}
&\frac{dx}{dt}=4x^3+xy^4-\left(2x^2+y^4\right)\left(8x+y^2\right)y,\\
&\frac{dy}{dt}=3x^2y+y^5+\left(2x^2+y^4\right)\left(x-4y^2\right).
\end{split}
\end{align}
We know that \eqref{eq16.3} is of the form \eqref{eq2} with $n=5$, $m=6$, $(p,q)=(2,1)$ and
\begin{align*}
&\bm X_5=(P_5,Q_5)=\bigg(4x^3+xy^4,3x^2y+y^5\bigg),\\
&\bm X_6=(P_6,Q_6)=\bigg(-\left(2x^2+y^4\right)\left(8x+y^2\right)y,\left(2x^2+y^4\right)\left(x-4y^2\right)\bigg).
\end{align*}
Thus, it is easy to check by \eqref{eq3} that
\begin{align*}
&a_5(\theta)=2\cos^2\theta+\sin^4\theta+\left(1+\cos^2\theta\right)\cos^2\theta,\\
&a_6(\theta)=-\left(2\cos^2\theta+\sin^4\theta\right)
\left(8-4\sin^2\theta-\cos^3\theta\right)\sin\theta,\\
&b_5(\theta)=\left(2\cos^2\theta+\sin^4\theta\right)\cos\theta\sin\theta,\\
&b_6(\theta)=\left(2\cos^2\theta+\sin^4\theta\right)^2.
\end{align*}

It can verify that $a_6(\theta)b_5(\theta)-a_5(\theta)b_6(\theta)$ is negative at $\theta=5\pi/4$ and positive at $\theta=\pi/2$. Therefore the representative results (I) and (II) in our introduction (section 1) are invalid for \eqref{eq16.3}.  Now we use  Theorem \ref{thm1} to determine the upper bound of the number of limit cycles of \eqref{eq16.3}.
Observe that
\begin{align*}
\frac{a_5(\theta)}{b_6(\theta)}-\dot{\left(\frac{b_5}{b_6}\right)}(\theta)
&=\frac{1}{2\cos^2\theta+\sin^4\theta}
   +\frac{\left(1+\cos^2\theta\right)\cos^2\theta}{\left(2\cos^2\theta+\sin^4\theta\right)^2}\\
&\ \ \ -\frac{\cos2\theta}{2\cos^2\theta+\sin^4\theta}
-\frac{4\cos^4\theta\sin^2\theta}{\left(2\cos^2\theta+\sin^4\theta\right)^2}\\
&=\frac{2\sin^2\theta}{2\cos^2\theta+\sin^4\theta}
 +\frac{\left(\cos^2\theta+\cos^2 2\theta\right)\cos^2\theta}{\left(2\cos^2\theta+\sin^4\theta\right)^2}\\
&>0.
\end{align*}
According to the theorem, system \eqref{eq16.3} has at most $1$ limit cycle.


\begin{proof}[Proof of Proposition \ref{prop2}]
Similar to the proof of Proposition \ref{prop3}, we obtain that $\rho=0$ and $\rho=1$ are two periodic orbits of \eqref{eq6}.
And transformation  \eqref{eq53} sends $V$ to the region $U_1$.
Note that the first derivative for the return map of \eqref{eq6} on $0$ and $1$ are
\begin{align*}
\exp\int^1_0\frac{\partial S}{\partial\rho}\left(\tau,0\right) d\tau
&=\exp\int^{2\pi}_0\left(\frac{a_n(\theta)}{b_n(\theta)}-\frac{\dot{b_n}(\theta)}{b_n(\theta)}
+\frac{\dot{b_m}(\theta)}{b_m(\theta)}\right) d\theta\\
&=\exp\int^{2\pi}_0\frac{a_n(\theta)}{b_n(\theta)}d\theta,\\
\exp\int^1_0\frac{\partial S}{\partial\rho}\left(\tau,1\right) d\tau
&=\exp\int^1_0\bigg(3 S(\tau,1)
-\alpha_2\big(\theta(\tau)\big)-2\alpha_1\big(\theta(\tau)\big)\bigg)d\tau\\
&
=\exp\int^{2\pi}_0\left(-\frac{a_m(\theta)}{b_m(\theta)}+\frac{\dot{b_n}(\theta)}{b_n(\theta)}
-\frac{\dot{b_m}(\theta)}{b_m(\theta)}\right)d\theta\\
&
=\exp\int^{2\pi}_0\left(-\frac{a_m(\theta)}{b_m(\theta)}\right)d\theta,
\end{align*}
respectively. By assumption, $\rho=0$ and $\rho=1$ are hyperbolic limit cycles with the same stability. For this reason, there exists at least $1$ limit cycle of \eqref{eq6} contained in $\big\{(\tau,\rho)\big|0<\rho<1,\ \tau\in[0,1]\big\}$. Again from the proof of Proposition \ref{prop3}, system \eqref{eq2} has at least $1$ limit cycle surrounding the origin.
\end{proof}

Here we provide a second example to compare Theorem \ref{thm1} with the representative results (I)-(IV) shown in section 1.

{\bf{Example 2.}}
Consider planar system
\begin{align}\label{eq16.1}
\begin{split}
&\frac{dx}{dt}=x-y-x^3+5x^2y-xy^2-y^3,\\
&\frac{dy}{dt}=x+y+3x^3-x^2y+9xy^2-y^3.
\end{split}
\end{align}
Then it is of the form \eqref{eq2} with $n=1$, $m=3$, $(p,q)=(1,1)$ and
\begin{align*}
&\bm X_1=(P_1,Q_1)=\left(x-y,x+y\right),\\
&\bm X_3=(P_3,Q_3)=\left(-x^3+5x^2y-xy^2-y^3,3x^3-x^2y+9xy^2-y^3\right).
\end{align*}
In addition, it follows from a direct calculation that
\begin{align*}
a_1(\theta)=2,\ \ a_3(\theta)=-2+8\sin2\theta,\ \ b_1(\theta)=1,\ \
b_3(\theta)=2+\cos2\theta.
\end{align*}
As a result,
\begin{align*}
&\frac{a_1(\theta)}{b_3(\theta)}-\dot{\left(\frac{b_1}{b_3}\right)}(\theta)
=\frac{4+2\cos2\theta-2\sin2\theta}{(2+\cos2\theta)^2}>0.
\end{align*}
Applying Theorem \ref{thm1}, the number of limit cycles of system \eqref{eq16.1} is at most $1$. In fact, we can obtain that $b_1(\theta)b_3(\theta)=2+\cos(2\theta)>0$ and
\begin{align*}
\int^{2\pi}_0\frac{a_1(\theta)}{b_1(\theta)}d\theta\cdot\int^{2\pi}_0\frac{a_3(\theta)}{b_3(\theta)}d\theta
=4\int^{2\pi}_0\frac{-1+4\sin2\theta}{2+\cos2\theta}d\theta
=-\int^{2\pi}_0\frac{4}{2+\cos2\theta}d\theta <0.
\end{align*}
Hence due to Proposition \ref{prop2} and Theorem \ref{thm1}, the system has exactly $1$ limit cycle, which surrounds the origin and is stable.

In contrast, for system \eqref{eq16.1} it also follows from a direct calculation that
\begin{align*}
&a_3b_1-a_1b_3=-6+8\sin2\theta-2\cos2\theta,\\
&b_3\big(a_3b_1-a_1b_3\big)=\big(2+\cos2\theta\big)\big(-6+8\sin2\theta-2\cos2\theta\big),\\
&a_3b_1-2a_1b_3-\dot{b_3}=-10+10\sin2\theta-4\cos2\theta.
\end{align*}
Obviously, all of these three equalities have indefinite signs, which violate the conditions of the representative results (I)-(IV). That is to say, our Theorem \ref{thm1} is indeed an important  supplement  of the previous works.

At the end of this section, we prove the following corollary and show that Theorem \ref{thm1} actually generalizes the main result given in \cite{taubes37}.

\begin{proof}[Proof of Corollary \ref{cor1}]
Firstly, since $\psi$ is differentiable and periodic, there exist $\theta_{\max}, \theta_{\min}\in[0,2\pi]$ such that
\begin{align*}
\psi(\theta_{\max})=\max_{\theta\in[0,2\pi]}\{\psi(\theta)\},\ \
\psi(\theta_{\min})=\min_{\theta\in[0,2\pi]}\{\psi(\theta)\},\ \
\dot{\psi}(\theta_{\max})=\dot{\psi}(\theta_{\min})=0.
\end{align*}
From assumption,
\begin{align*}
&(n-1)a\psi(\theta_{\max})=(n-1)a\psi(\theta_{\max})+\dot{\psi}(\theta_{\max})\neq0,\\
&(n-1)a\psi(\theta_{\min})=(n-1)a\psi(\theta_{\min})+\dot{\psi}(\theta_{\min})\neq0,
\end{align*}
which implies that $\psi(\theta_{\max})\neq0$, $\psi(\theta_{\min})\neq0$ and $a\neq0$.
Thus, $\text{sgn}\big(\psi(\theta_{\max})\big)=\text{sgn}\big(\psi(\theta_{\min})\big)\neq0$ (otherwise there exists $\theta_0\in[0,2\pi]$ satisfying $\dot{\psi}(\theta_{0})+(n-1)a\psi(\theta_{0})=0$). We obtain $\psi(\theta)\neq0$.

Secondly, system \eqref{eq9} is of the form \eqref{eq2} with $n=p=q=1$ and $\bm X_1=(P_1,Q_1)=(ax-y,x+ay)$. Hence we get by \eqref{eq3} that
\begin{align*}
a_1(\theta)=(m-1)a,\indent b_1(\theta)=1,\indent b_m=\psi(\theta).
\end{align*}
This leads to
\begin{align*}
&\frac{a_1(\theta)}{b_m(\theta)}-\dot{\left(\frac{b_1}{b_m}\right)}(\theta)
=\frac{1}{\psi^2(\theta)}\left((m-1)a\psi(\theta)+\dot{\psi}(\theta)\right)\neq0,\\
&a_1(\theta)-b_m(\theta)\dot{\left(\frac{b_1}{b_m}\right)}(\theta)
=(m-1)a+\frac{\dot{\psi}(\theta)}{\psi(\theta)}.
\end{align*}
According to Theorem \ref{thm1}, our assertion is true.
\end{proof}


\begin{thebibliography}{99}






\bibitem{taubes48} A. Algaba, E. Freire, E. Gamero, C. Garc\'{i}a, {\em Monodromy, center-focus and integrability problems for quasi-homogeneous
polynomial systems}, Nonlinear Anal., {\bf 72} (2010), 1726-1736.

\bibitem{taubes49} A. Algaba, E. Gamero, C. Garc\'{i}a, {\em The integrability problem for a class of planar systems}, Nonlinearity, {\bf 22} (2009), 396-420.

\bibitem{taubes50} A. Algaba, C. Garc\'{i}a, M. Reyes, {\em Integrability of two dimensional quasi-homogeneous polynomial differential
systems}, Rocky Mt. J. Math., {\bf 41} (2011), 1-22.

\bibitem{taubes1} M. J. \'{A}lvarez, A. Gasull, H. Giacomini, {\em A new uniqueness criterion for the number of periodic orbits of Abel equations}, J. Differential Equations, {\bf 234} (2007), 161-176.

\bibitem{taubes36} R. Benterki, J. Llibre, {\em Limit cycles of polynomial differential equations with quintic homogeneous nonlinearities}, J. Math. Anal. Appl., {\bf 407} (2013), 16-22.






\bibitem{taubes40} L. Cair\'{o}, J. Llibre, {\em Phase portraits of planar semi-homogeneous vector fields (I)}, Nonlinear Anal., {\bf 29} (1997), 783-811.

\bibitem{taubes41} L. Cair\'{o}, J. Llibre, {\em Phase portraits of planar semi-homogeneous vector fields (II)}, Nonlinear Anal., {\bf 39} (2000), 338-366.

\bibitem{taubes30} M. Carbonell, J. Llibre, {\em Limit cycles of a class of polynomial systems}, Proc. Royal Soc. Edinburgh, {\bf 109A} (1988),
187-199.


\bibitem{CGGIJBC}   J. Chavarriga, I. A. Garcia, J. Gine,   {\em On integrability of differential equations defined by the sum of homogeneous vector fields with degenerate infinity},
Int. J. Bifurcation Chaos, {\bf 11} (2011), 711-722.



\bibitem{taubes28} L. A. Cherkas, {\em Number of limit cycles of an autonomous second-order system},
Differ. Uravn., {\bf 12} (1975),
944-946.






\bibitem{taubes45} A. Cima, A. Gasull, F. Ma\'{n}osas, {\em Limit cycles for vector fields with homogeneous components}, Appl. Math., {\bf 24} (1997), 281-287.

\bibitem{taubes44} A. Cima, J. Llibre, {\em Algebraic and topological classification of homogeneous cubic vector fields in the plane}, J. Math. Anal. Appl., {\bf 147} (1990), 420-448.


\bibitem{taubes29} T. Coll, A. Gasull, R. Prohens, {\em Differential equations defined by the
sum of two quasi¨Chomogeneous vector fields}, Can. J. Math., {\bf 49} (1997),
212-231.





\bibitem{taubes2} A. Gasull, J. Llibre, {\em Limit cycles for a class of Abel equation},
SIAM J. Math. Anal., {\bf 21} (1990),
1235-1244.

\bibitem{taubes32} A. Gasull, J. Yu, X. Zhang, {\em Vector fields with homogeneous nonlinearities and many limit cycles}, J. Differential Equatiobns, {\bf 258} (2015),
3286-3303.

\bibitem{taubes43} L. Gavrilov, J. Gin\'{e}, M. Grau, {\em On the cyclicity of weight-homogeneous centers}, J. Differential Equations, {\bf 246} (2009), 3126-3135.

\bibitem{taubes51} J. Gin\'{e}, M. Grau, J. Llibre, {\em Limit cycles bifurcating from planar polynomial quasi-homogeneous centers}, J. Differential Equations, {\bf 259} (2015), 7135-7160.


\bibitem{taubes37} J. Huang, H. Liang, {\em An uniqueness criterion of limit cycles for the polynomial systems with homogeneous nonlinearities}, J. Math. Anal. Appl., DOI:10.1016/j.jmaa.2017.08.008.

\bibitem{taubes18} J. Huang, Y. Zhao, {\em Periodic solutions for equation $\dot{x}=A(t)x^m+B(t)x^n+C(t)x^l$ with $A(t)$ and $B(t)$ changing signs}, J. Differential Equations, {\bf 253} (2012), 73-99.




\bibitem{taubes42} W. Li, J. Llibre, J. Yang, Z. Zhang, {\em Limit cycles bifurcating from the period annulus of quasi-homogeneous centers}, J. Dynam. Differential Equations, {\bf 21} (2009), 133-152.

\bibitem{taubes46} J. Llibre, Jes\'{u}s S. P\'{e}rez del R\'{i}o, J. A. Rodr\'{i}guez, {\em Structural stability of planar homogeneous polynomial vector fields: applications to critical points and
to infinity}, J. Differential Equations, {\bf 125} (1996), 490-520.

\bibitem{taubes47} J. Llibre, Jes\'{u}s S. P\'{e}rez del R\'{i}o, J. A. Rodr\'{i}guez, {\em Structural stability of planar semi-homogeneous polynomial vector fields: applications to critical points and
to infinity}, Discrete Contin. Dyn. Syst., {\bf 6} (2000), 809-828.

\bibitem{taubes31} J. Llibre, G. \'{S}wirszcz, {\em On the limit cycles of polynomial vector fields}, Dyn. Contin. Discrete Impuls. Syst, {\bf 18} (2011),
203-214.

\bibitem{taubes35} J. Llibre, C. Valls, {\em Classification of the centers, their cyclicity and isochronicity for a class of polynomial differential
systems generalizing the linear systems with cubic homogeneous nonlinearities}, J. Differential Equations, {\bf 46} (2009), 2192-2204.



\bibitem{taubes4} N. G. Lloyd, {\em A note on the number of limit cycles in certain two-dimensional systems}, J. London Math. Soc., {\bf 20} (1979),
277-286.

\bibitem{taubes33} N. G. Lloyd, J. M. Pearson, {\em Bifurcation of limit cycles and integrability of planar
dynamical systems in complex form}, J. Phys. A: Math. Gen., {\bf 32} (1999),
1973-1984.




\bibitem{taubes34} K. S. Sibirskii, {\em On the number of limit cycles in the neighborhood of a singular point}, Differential Equations, {\bf 1} (1965), 36-47.




























\end{thebibliography}
\end{document}